\newtheorem{thm}{Theorem}
\newtheorem{cnj}[thm]{Conjecture}
\newtheorem{lem}[thm]{Lemma}
\newtheorem{prob}{Problem}
\newcommand\omicron{o}
\newcommand{\C}{\mathcal{C}}
\newcommand{\B}{\mathcal{B}}
\newcommand{\R}{\mathcal{R}}
\begin{document}

\title{On intersecting families of subgraphs of perfect matchings}

\author{
Melissa Fuentes \footnotemark[1]\ 
\and
Vikram Kamat \footnotemark[2]
}
\date{}
\maketitle

\footnotetext[1]{
Department of Mathematics \& Statistics, 
Villanova University, 
Villanova, PA, USA, 
\texttt{melissa.fuentes@villanova.edu}
}

\footnotetext[2]{
Department of Mathematics \& Statistics, 
Villanova University, 
Villanova, PA, USA, 
\texttt{vikram.kamat@villanova.edu}
}


\begin{abstract}
\noindent
   The seminal Erd\H{o}s--Ko--Rado (EKR) theorem states that if $\mathcal{F}$ is a family of $k$-subsets of an $n$-element set $X$ for $k\leq n/2$ such that every pair of subsets in $\mathcal{F}$ has a nonempty intersection, then $\mathcal{F}$ can be no bigger than the trivially intersecting family obtained by including all $k$-subsets of $X$ that contain a fixed element $x\in X$. This family is called the \textit{star} centered at $x$. In this paper, we formulate and prove an EKR theorem for intersecting families of subgraphs of the perfect matching graph, the graph consisting of $n$ disjoint edges. This can be considered a generalization not only of the aforementioned EKR theorem but also of a \textit{signed} variant of it, first stated by Meyer \cite{Meyer}, and proved separately by Deza--Frankl \cite{DezFra} and Bollob\'as--Leader \cite{BollLead}. The proof of our main theorem relies on a novel extension of Katona's beautiful \textit{cycle method}.
\end{abstract}

\section{Introduction}
For a finite set $X$ containing $n$ elements, where $n$ is a positive integer, let $2^{X}$ and $\binom{X}{r}$ denote the family of all subsets and $r$-subsets of $X$, respectively. For any $\cG\subseteq 2^{X}$ and $x\in X$, let $\cG_x$ be all sets in $\cG$ that contain $x$. We call $\cG_x$ the \textit{star in $\cG$ centered at $x$}. 
\\
\\
A family of subsets $\cF$ is \textit{intersecting} if $F\cap G\neq \emptyset$ for $F, G\in \cF$.  A classical result of Erd\H{o}s, Ko and Rado \cite{EKR} states that if $\cF\subseteq \binom{X}{r}$ is intersecting for $r\leq n/2$, then $|\cF|\leq \binom{n-1}{r-1}$. Moreover, if $r<n/2$, equality holds if and only if $\cF=\binom{X}{r}_x$ for some $x\in X$. 
\\
\\
The Erd\H{o}s--Ko--Rado theorem is one of the fundamental theorems in extremal combinatorics, and has been generalized in many directions. For instance, an ``EKR-type'' problem can be defined on a class of mathematical objects with some natural notion of pairwise intersection on objects in this class; a typical result along these lines involves finding a best possible upper bound on the size of the largest intersecting subfamily within this class. Furthermore, it can often be shown that the \textit{extremal} structures -- typically analogous to the star structure defined above  -- are unique. Indeed, for $\mathcal{G}\subseteq 2^X$, we say that $\cG$ is \textit{EKR} if there exists $x\in X$ such that for any intersecting subfamily $\cF\subseteq \cG$, $|\cF|\leq |\cG_x|$. Furthermore, we say that $\cG$ is \textit{strongly EKR} if \emph{every} maximum intersecting subfamily of $\cG$ is a star in $\mathcal{G}$. For example, it is an easy exercise to verify that if $\cG=2^X$, then $\cG$ is EKR but not strongly EKR. 
\\
\\
EKR results along these lines have been proved for, among other objects, permutations, vector spaces, set partitions and families of independent sets for certain classes of graphs. We refer the reader to \cite{GodMea}, and the references contained within, for more details on these and other generalizations inspired by the theorem. 
\\
\\
In this paper, we consider an EKR-type problem for families of induced subgraphs in the \textit{perfect matching graph}, which we define in the next section.

\subsection{Induced subgraphs of perfect matchings}
Let $G=(V,E)$ be a graph with vertex set $V=V(G)$ and edge set $E=E(G)$ containing (undirected) edges between pairs of vertices. An \textit{induced} subgraph $H=(V',E')$ of $G$ is a subgraph with $V'\subseteq V$ and $E'\subseteq E$ such that for any $u,v\in V'$, $\{u,v\}\in E'$ if and only if $\{u,v\}\in E$. Also, for positive integers $i,j, n$ and $1\leq i\leq j\leq n$ let $[i,j]=\{i,i+1,\ldots,j\}$. Let $[n]=[1,n]$.
\\
\\
For $n\geq 1$, we define the \textit{perfect matching graph}, denoted by $M_n$, as the graph that consists of $n$ pairwise disjoint copies of the complete graph $K_2$. Let $E(M_n)=\{ e_1, e_2, \ldots, e_n\}$. For each $i \in [n]$, let $e_i=\{l_i,r_i\}$, where $l_i$ and $r_i$ are the vertex endpoints of the edge $e_i$. Then $V(M_n)=L\cup R$, where $L=\{l_1,l_2,\ldots,l_n\}$, and $R=\{r_1,r_2,\ldots,r_n\}$.
\\
\\
For $r\geq 1$, denote the family of all induced subgraphs of $M_n$ containing $r$ vertices by $\mathcal{H}^{(r)}(n)$. For $s,p\geq 0$ and $2p+s\geq 1$, let $\mathcal{H}^{(p,s)}(n)=\{V(H):H\in \mathcal{H}^{(2p+s)}(n), |E(H)|=p\}$; that is, $\mathcal{H}^{(p,s)}(n)$ is the family of vertex subsets of all induced subgraphs of $M_n$ that consist of $p$ disjoint edges and $s$ isolated vertices. Though members of this family are vertex subsets of $M_n$, we will refer to them as subgraphs; additionally, for brevity, a member $V(H)\in \mathcal{H}^{(p,s)}(n)$ will be denoted by $H$. Note that 

\begin{equation}\label{eq1}
|\mathcal{H}^{(p,s)}(n)|=\binom{n}{p}\binom{n-p}{s}2^s.
\end{equation}
Note also that for any $x\in V(M_n)$, the cardinality of the star in $\mathcal{H}^{(p,s)}(n)$ centered at $x$ is given as follows:
\begin{equation} \label{eq2}
|\mathcal{H}^{(p,s)}_x(n)|=\binom{n-1}{p-1}\binom{n-p}{s}2^s+\binom{n-1}{p}\binom{n-p-1}{s-1}2^{s-1}=\displaystyle (2p+s)\frac{(n-1)!}{p!s!(n-p-s)!}2^{s-1}.
\end{equation}
Finally, using Equations \ref{eq1} and \ref{eq2}, we observe that
\begin{equation}\label{eq3}
(2n)|\mathcal{H}^{(p,s)}_x(n)|=(2p+s)|\mathcal{H}^{(p,s)}(n)|.
\end{equation}
\\
\\
We are now ready to state our main results.

\section{Main results \& a conjecture}
Our main results are motivated by the following conjecture that we propose as an extension of two distinct EKR theorems.
\begin{cnj}\label{cj}
     For non-negative integers $s,p$ and $1\leq 2p+s\leq n$, $\mathcal{H}^{(p,s)}(n)$ is EKR.
\end{cnj}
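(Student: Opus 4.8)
The plan is to deduce the conjecture from the density identity recorded in Equation \ref{eq3}. Since $(2n)\,|\mathcal{H}^{(p,s)}_x(n)| = (2p+s)\,|\mathcal{H}^{(p,s)}(n)|$ for every vertex $x$, it suffices to prove the \emph{uniform} bound $|\mathcal{F}| \le \tfrac{2p+s}{2n}\,|\mathcal{H}^{(p,s)}(n)|$ for every intersecting $\mathcal{F}\subseteq \mathcal{H}^{(p,s)}(n)$; any single $x$ then witnesses the EKR property, so no individual vertex needs to be singled out in advance. To establish this bound I would run Katona's cycle method through the full automorphism group of the host graph. Writing $\Gamma=\mathrm{Aut}(M_n)\cong S_n\ltimes(\mathbb{Z}_2)^n$ for the group that permutes the $n$ edges and independently swaps $l_i\leftrightarrow r_i$ inside each edge, observe that $\Gamma$ acts \emph{transitively} on $\mathcal{H}^{(p,s)}(n)$: a member is determined by its set of $p$ full edges, its set of $s$ isolated edges, and an orientation of each isolated edge, and $\Gamma$ permutes all such data. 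This transitivity is exactly what powers the averaging step.

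The novel ingredient is the choice of cyclic reference structure together with a notion of \emph{interval member}, and its shape is pinned down by the two extreme specializations that the conjecture must contain. When $s=0$, a member is a $p$-subset of the $n$ edges and intersection means a shared edge, so the problem collapses to classical EKR on an $n$-edge cycle, with Katona cap $p$; when $p=0$, a member selects one endpoint from each of $s$ edges and intersection means a shared signed vertex, which is precisely the signed EKR theorem of Meyer, Deza--Frankl and Bollob\'as--Leader, with cap $s$. A correct definition of interval member must therefore reduce to ordinary arcs of length $p$ on the edge-cycle when $s=0$ and to signed arcs of length $s$ when $p=0$, and interpolate between them. The design target is that each arrangement carry its interval members in number $2n$ with a per-arrangement cap of $2p+s$ (or, more flexibly, in the ratio $\tfrac{2p+s}{2n}$), so that the ratio matches the density above.

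With such a family in hand the argument closes by a double count over $\Gamma$. For a fixed arrangement the central \textbf{cycle lemma} should assert that any intersecting subfamily of interval members has at most $2p+s$ elements; summing this over all $\gamma\in\Gamma$ and using transitivity — so that each member of $\mathcal{H}^{(p,s)}(n)$ occurs as an interval member in the same number of arrangement/position pairs — yields $|\mathcal{F}|\cdot 2n \le (2p+s)\,|\mathcal{H}^{(p,s)}(n)|$, which is exactly the bound needed. The hypothesis $2p+s\le n$ should enter at the level of the cycle lemma in the same way that $r\le n/2$ enters Katona's original circle lemma, guaranteeing that the interval members are short enough for the intersecting cap to hold.

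The hard part, and the location of the genuine ``extension'' of the method, is simultaneously defining interval members and proving the cycle lemma. The naive guess fails: placing the $2n$ vertices on a circle with the two endpoints of each edge antipodal and declaring all $2n$ rotations of a member to be intervals does not work, because a ``spread-out'' member — for instance one whose chosen vertices form an arithmetic progression of common difference two — admits far more than $2p+s$ pairwise-intersecting rotations, violating the cap. Hence interval members cannot be arbitrary rotations; they must be genuinely \emph{clustered}, at once forcing the two endpoints of a full edge to sit together (as classical arcs demand) and forcing the chosen endpoints of isolated edges into the signed-antipodal pattern (as signed EKR demands). Reconciling these two competing requirements inside a single cyclic structure, and then proving the Katona-type cap of $2p+s$ for the resulting intervals — presumably via an exchange or shifting argument carried out arrangement by arrangement — is the crux on which the entire proof rests.
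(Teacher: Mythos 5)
You should first note what the target statement is: it is Conjecture \ref{cj}, which the paper itself does \emph{not} prove. The paper establishes it only under stronger hypotheses --- Theorem \ref{th1} covers $n\geq 2(p+s)$ via the cycle method, and Part \ref{two1} of Theorem \ref{th2} covers the boundary case $n=2p+s$ via an elementary complement-pairing argument combined with Equation \ref{eq3} --- while the intermediate range $2p+s<n<2(p+s)$ (nonempty whenever $s\geq 2$) is left open, and Section \ref{future} explicitly says it is unclear whether the cycle method can settle it. Your proposal is, in outline, exactly the paper's proof of Theorem \ref{th1}: the reduction via Equation \ref{eq3}, the averaging over all signed cyclic orders of the $n$ edges, and the design target of $2n$ intervals per order with an intersecting cap of $2p+s$ all match Section \ref{cycle}. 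But as written it proves nothing: you explicitly defer both the definition of the intervals and the proof of the cycle lemma, and that deferred step is where the entire content of the argument lies.

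Moreover, the gap is not merely that the crux is missing; the cap you target is provably unattainable, in the range where the conjecture is actually open, for intervals of the ``clustered'' kind you argue are forced. Take the paper's $B$-intervals (full edges first, then singletons taken with the fixed sign determined by the order): every $B$-interval covering position $m$ contains the vertex $e^{\tau_m}_{\sigma(m)}$, so any two $B$-intervals whose position arcs overlap share a vertex. When $n<2(p+s)$, any two arcs of $p+s$ consecutive positions in $\mathbb{Z}_n$ overlap, so \emph{all} $n$ $B$-intervals of a single cyclic order form an intersecting family; since $n>2p+s$ throughout the open range, the cap of $2p+s$ fails there. This is precisely why Lemma \ref{lem1} requires $n\geq 2(p+s)$: the hypothesis entering the cycle lemma is not ``$2p+s\leq n$'' by analogy with Katona's $r\le n/2$, as you assert, but the stronger condition $n\geq 2(p+s)$, because the relevant length of an interval is $p+s$ edge positions rather than $2p+s$ vertices. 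So your plan, completed in the only way you indicate, would reprove Theorem \ref{th1} and leave Conjecture \ref{cj} untouched; closing the conjecture requires either a genuinely different interval structure (you propose none), a different method altogether (the paper suggests algebraic frameworks or compression), and, for the single case $n=2p+s$, the short complementation argument of Part \ref{two1} of Theorem \ref{th2}.
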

\noindent
Indeed, it is easy to observe that the case $s=0$ is the Erd\H{o}s--Ko--Rado theorem itself. Furthermore, the case $p=0$ is an EKR result for intersecting families of \textit{independent} sets in $M_n$, first stated in Meyer \cite{Meyer}, and proved separately by Deza--Frankl \cite{DezFra} and Bollob\'as--Leader \cite{BollLead}.
\\
\\
Our main result in this paper is the following theorem which assumes a stronger condition on $n$ (for any fixed $p$ and $s$) as compared to Conjecture \ref{cj}.
\begin{thm}\label{th1}
    For $s, p\geq 1$ and  $n\geq 2(p+s)$, $\mathcal{H}^{(p,s)}(n)$ is EKR; it is \textit{strongly} EKR when $n>2(p+s)$.
\end{thm}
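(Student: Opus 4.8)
The plan is to adapt Katona's cycle method to the $2n$ vertices of $M_n$, with the target ratio dictated by Equation~\eqref{eq3}: since $(2n)\,|\mathcal{H}^{(p,s)}_x(n)| = (2p+s)\,|\mathcal{H}^{(p,s)}(n)|$, it suffices to produce a family of ``cyclic arrangements'' of the ground set together with a notion of \emph{arc} such that (i) each arrangement determines exactly $2n$ arcs, every one a member of $\mathcal{H}^{(p,s)}(n)$; (ii) by the symmetry of $M_n$ and of the construction, every member of $\mathcal{H}^{(p,s)}(n)$ occurs as an arc in the same number of arrangements; and (iii) any intersecting subfamily contains at most $2p+s$ of the $2n$ arcs of a fixed arrangement. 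Granting these, a double count of the incidences (arrangement, arc lying in $\mathcal{F}$) yields $|\mathcal{F}|\le \tfrac{2p+s}{2n}\,|\mathcal{H}^{(p,s)}(n)| = |\mathcal{H}^{(p,s)}_x(n)|$, which is the desired EKR bound.

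First I would set up the arrangements, and here lies the first real obstacle: arcs cannot simply be blocks of $2p+s$ consecutive vertices. A member touches only $p+s$ of the $n$ edges, so the remaining $n-(p+s)$ edges must be \emph{skippable}, whereas a fixed-length window on a circle through all $2n$ vertices cannot realize a prescribed number $p$ of full edges independently of where the window falls. I would therefore build the cyclic structure on the $n$ edges, recording a cyclic order together with an orientation (a choice of endpoint) for each edge, and define the $2n$ arcs as the oriented ``sweeps'' that pick up $p$ full edges and $s$ half-edges. The two orientations supply, upon averaging over arrangements, the $2^s$ sign patterns of the isolated vertices, while the factor of two between $n$ and $2n$ is exactly the per-edge orientation degree of freedom. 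Confirming (i) and (ii) -- that this yields precisely $2n$ arcs and that each member of $\mathcal{H}^{(p,s)}(n)$ is hit uniformly -- mirrors the classical computation but must carefully separate the full-edge and half-edge contributions, just as Equation~\eqref{eq2} splits $|\mathcal{H}^{(p,s)}_x(n)|$ into the case that $x$ lies on a full edge and the case that $x$ is isolated.

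The heart of the argument is (iii), the analogue of Katona's ``at most $r$ arcs'' lemma. I would index the $2n$ arcs cyclically and argue that if an intersecting family contains a given arc $A$, then every other arc it contains must meet $A$; pairing each such arc with its appropriately shifted \emph{complementary} arc, one shows the two members of a pair are vertex-disjoint, so at most one survives, and counting the pairs together with $A$ yields the bound $2p+s$. Here the hypothesis $n\ge 2(p+s)$ enters decisively: a member occupies $p+s$ edges, and the shifted member occupies a disjoint set of $p+s$ edges precisely when $2(p+s)\le n$, with no wraparound forcing a spurious intersection. Verifying this disjointness in the presence of both full edges (which may share two vertices) and signed half-edges (which intersect only when their sides agree) is where the combinatorics is most delicate, and I expect it to be the main obstacle.

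Finally, for the strong EKR conclusion when $n>2(p+s)$, I anticipate that the strict inequality provides slack in the pairing: equality in (iii) should then force, within each arrangement, the $2p+s$ arcs of $\mathcal{F}$ to form a single ``run'' sharing a common vertex, the local analogue of a star. The remaining, more technical, task is to promote these per-arrangement stars to a global one, showing that a family attaining equality in every arrangement must equal $\mathcal{H}^{(p,s)}_x(n)$ for a single fixed $x$. This is the step where I expect the most case analysis, and where the distinction between $n=2(p+s)$ (EKR, but possibly with additional extremal families) and $n>2(p+s)$ (uniqueness, hence strong EKR) should emerge.
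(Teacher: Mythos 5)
Your setup coincides with the paper's: cyclic orders of the $n$ edges of $M_n$ together with an orientation of each edge, two species of arcs per arrangement ($p$ full edges followed by $s$ designated singletons, and the reversed reading), hence $2n$ arcs, and a double count calibrated against Equation \eqref{eq3}. Parts (i) and (ii) of your plan are sound and match the paper. The genuine gap is in step (iii), which is where all the difficulty lives. The argument you sketch --- fix an arc $A\in\mathcal{F}$, observe that every other arc of $\mathcal{F}$ meets $A$, and pair the arcs meeting $A$ into vertex-disjoint pairs --- is Katona's single-anchor pairing, and it provably cannot yield the bound $2p+s$ once $s\geq 2$. Count the arcs meeting a fixed $B$-type arc $A$: there are $2(p+s)-2$ other $B$-type arcs (two $B$-arcs sharing any edge-position always share that position's designated first vertex), and $2p+s-1$ $R$-type arcs (of the $2(p+s)-1$ $R$-arcs overlapping $A$ in positions, exactly $s$ meet $A$ only in opposite-type singleton slots and are therefore vertex-disjoint from it); in total $4p+3s-3$ arcs. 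A single-anchor argument partitions these into classes of pairwise disjoint arcs and concludes that at most one arc per class lies in $\mathcal{F}$. Classes of three or more pairwise disjoint arcs all meeting $A$ are not available (for instance, with $p=1$, $s=2$, $n=6$, one checks directly that no three of the seven arcs meeting $A=B_3$ are pairwise disjoint), so the best such a partition can give is $|\mathcal{F}_\sigma^\tau|\leq 1+\lceil(4p+3s-3)/2\rceil=2p+\lceil(3s-1)/2\rceil$, which exceeds $2p+s$ for every $s\geq 2$ and equals it only when $s=1$. So, as written, your plan proves the theorem only for $s=1$.

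The paper's Lemma \ref{lem1} closes exactly this gap with a different mechanism: instead of anchoring at a single arc, it anchors at \emph{two} $B$-arcs realizing the minimum pairwise intersection size $k$ (with $1\leq k\leq s+1$) among the $B$-arcs of $\mathcal{F}_\sigma^\tau$, and uses both anchors simultaneously to exclude arcs, obtaining the complementary bounds $|\mathcal{B}_\sigma^\tau|\leq p+s-k+1$ and $|\mathcal{R}_\sigma^\tau|\leq p+k-1$, whose sum is $2p+s$ because the dependence on $k$ cancels. This trade-off parameter $k$ is the essential idea your proposal is missing, and nothing in your sketch suggests it. A second, smaller understatement occurs in your uniqueness paragraph: even granting equality in every arrangement, the $2p+s$ arcs of $\mathcal{F}_\sigma^\tau$ need not share a common vertex until one rules out $2\leq k\leq s$; the paper needs a separate argument for precisely this point (Lemma \ref{lem3}, which compares the given arrangement with one obtained by applying two transpositions) before the common centers can be propagated across all arrangements via adjacent transpositions and swaps (Lemma \ref{lem4}).
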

\noindent
We also prove the following result, which proves Conjecture \ref{cj} for $n=2p+s$ and presents a strengthening of Theorem \ref{th1} for the special case $s=1$. 
\begin{thm}\label{th2}
Let $s,p\geq 1$.
\begin{enumerate}
  \item \label{two1} $\mathcal{H}^{(p,s)}(2p+s)$ is EKR but not strongly EKR. 
  \item \label{two2} If $s=1$ and $n>2p+1$, then $\mathcal{H}^{(p,s)}(n)$ is strongly EKR.
\end{enumerate}
\end{thm}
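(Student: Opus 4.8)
The plan is to treat the two parts separately, exploiting that $n=2p+s$ is the self-complementary extreme while $n=2p+2$ (for $s=1$) is exactly the boundary $n=2(p+s)$ left open by Theorem~\ref{th1}.

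For part~\ref{two1}, first I would set up the complementation involution $H\mapsto H^{c}:=V(M_n)\setminus H$. When $n=2p+s$, a member $H$ touches $p+s$ edges (fully using $p$ of them and taking one endpoint from $s$ of them) and leaves $n-(p+s)=p$ edges untouched; its complement therefore fully uses those $p$ untouched edges and takes the opposite endpoint on each of the $s$ half-used edges, so $H^{c}\in\mathcal{H}^{(p,s)}(n)$ as well, while clearly $H\cap H^{c}=\emptyset$. Thus $\mathcal{H}^{(p,s)}(2p+s)$ partitions into $\tfrac12|\mathcal{H}^{(p,s)}(2p+s)|$ disjoint complementary pairs, and any intersecting family contains at most one member of each pair, giving
\begin{equation*}
|\mathcal{F}|\le \tfrac12\,|\mathcal{H}^{(p,s)}(2p+s)|.
\end{equation*}
Specializing Equation~\eqref{eq3} to $n=2p+s$ yields $|\mathcal{H}^{(p,s)}_x(2p+s)|=\tfrac12|\mathcal{H}^{(p,s)}(2p+s)|$, so the star already meets this bound and $\mathcal{H}^{(p,s)}(2p+s)$ is EKR.

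To see it is not strongly EKR, I would perturb a single complementary pair inside a star. Fix $x$ and choose any $H_0\in\mathcal{H}^{(p,s)}_x(2p+s)$. Because all members have the same size $2p+s$, a member $H$ satisfies $H\cap H_0^{c}=\emptyset$ if and only if $H\subseteq H_0$, i.e.\ $H=H_0$; hence $H_0^{c}$ meets \emph{every} member except $H_0$ itself. Consequently $\mathcal{F}^{*}:=(\mathcal{H}^{(p,s)}_x(2p+s)\setminus\{H_0\})\cup\{H_0^{c}\}$ is again intersecting and of maximum size, yet it is not a star: since $x\notin H_0^{c}$ it is not centered at $x$, and for $p,s\ge1$ one checks that every other vertex $y$ is avoided by at least two members of the $x$-star, so deleting the single member $H_0$ cannot make $y$ common to all of $\mathcal{F}^{*}$. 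The only delicate point, which I would dispatch by a direct check, is confirming this "at least two members" count in the smallest cases (e.g.\ $p=s=1$, $n=3$).

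For part~\ref{two2}, Theorem~\ref{th1} already gives strong EKR whenever $n>2(p+s)=2p+2$, so the genuinely new case is the boundary $n=2p+2$, $s=1$, where Theorem~\ref{th1} guarantees only that the star is optimal. Here I would return to the cycle-method machinery underlying Theorem~\ref{th1} and push it to an \emph{equality analysis}: given a maximum intersecting $\mathcal{F}$, in each cyclic ordering the cyclic bound is attained, which forces the members of $\mathcal{F}$ realized in that ordering to form a tight consecutive block, and I would then show these local blocks can be glued consistently across orderings only when $\mathcal{F}$ is a star. The single isolated vertex ($s=1$) is what makes this rigid: with only one half-edge to place, the configurations meeting the cyclic bound with equality are far more constrained than for general $s$, which is precisely why the threshold can be resolved here but is excluded in Theorem~\ref{th1}. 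I expect this equality analysis at the threshold to be the main obstacle; an alternative I would keep in reserve is a Hilton--Milner-type stability bound showing directly that any non-star intersecting family in $\mathcal{H}^{(p,1)}(n)$ has size strictly below $|\mathcal{H}^{(p,1)}_x(n)|$ once $n\ge 2p+2$.
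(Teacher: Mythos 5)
Your Part \ref{two1} is essentially correct. The EKR bound via the complementation involution $H\mapsto V(M_n)\setminus V(H)$ combined with Equation \ref{eq3} is exactly the paper's argument. For the failure of strong EKR you use a different witness: a Hilton--Milner-type perturbation $(\mathcal{H}^{(p,s)}_x(2p+s)\setminus\{H_0\})\cup\{H_0^{c}\}$ of the star, while the paper takes the ``co-star'' $\mathcal{H}^{(p,s)}(2p+s)\setminus \mathcal{H}^{(p,s)}_{l_{2p+s}}(2p+s)$ of all subgraphs avoiding one fixed vertex, proving it intersecting by observing that two disjoint members avoiding $l_{2p+s}$ would both have to contain $r_{2p+s}$. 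Your construction is also valid, but it rests on the asserted count that every vertex $y\neq x$ is avoided by at least two star members; this is true (for instance, the $\binom{2p+s-1}{p}\binom{p+s-1}{s-1}2^{s-1}\geq 2$ members in which $x$ occurs as a singleton all avoid the partner of $x$, and a similar count handles the remaining vertices), but you leave it as a check, whereas the paper's witness requires no such verification.

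Part \ref{two2} has a genuine gap: after the (correct) reduction to the single boundary case $n=2p+2$, you offer only a plan. Your sketch --- in each cyclic order the bound is attained, forcing a tight consecutive block, then glue the blocks across orders --- does not engage with the actual obstruction: the structural Lemma \ref{lem2} is precisely what fails at $n=2(p+s)$, since its proof repeatedly invokes $n>2(p+s)$ to force pairs such as $B_{j-1}$ and $B_{p+s+j}$ to be disjoint; at the threshold, a maximum $\mathcal{F}_\sigma^\tau$ need not form a consecutive block in the edge-level cyclic orders, so there are no tight blocks to glue. The paper exploits $s=1$ through a concrete device your proposal never identifies: it abandons the edge-level orders and passes to cyclic orders of the $2n$ vertices (keeping the endpoints of each edge adjacent), where --- only because $s=1$ --- every window of $2p+1$ consecutive vertices is automatically a member of $\mathcal{H}^{(p,1)}(n)$, and the window length $2p+1$ is less than half the cycle length $4p+4$. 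This lets the standard corollary of Katona's lemma center each cyclic order at a single vertex, after which a case analysis over the adjacent transpositions $t_i$ with $i\in\{p,p+1,p+2\}$ (in the spirit of Lemma \ref{lem4}) propagates that common center to all orders, yielding $\mathcal{F}=\mathcal{H}^{(p,1)}_{r_{2p+2}}(2p+2)$. Without some such mechanism --- or an actual proof of your fallback Hilton--Milner-type stability bound, which you likewise only name --- the case $n=2p+2$ remains unproved.
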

\noindent
The proof of Theorem \ref{th1} employs a novel extension of Katona's famous cycle method. In contrast to the cyclic orderings employed in \cite{BollLead} and \cite{FHK} to prove their respective EKR theorems, it is more natural here to consider cyclic permutations of the edges in $M_n$. However, to account for the fact that subgraphs contain edges -- i.e. both vertices -- or just one vertex from an edge, we have to expand the definition of an \textit{interval} in these cyclic orders. We do so by considering two different types of intervals within a given cyclic order. 
\\
\\
We also mention here that the variant of the cycle method formulated in \cite{FHK} could be used to prove a weaker version of Theorem \ref{th1}; more precisely, it would require a stronger condition on $n$, namely $n\geq 2(2p+s)$. Our result, on the other hand, also handles the cases when $2p+2s\leq n<4p+2s$.
\\
\\
The rest of the paper is organized as follows. In Section \ref{cycle}, we define all notation necessary for our generalization of Katona's method and lemma, and use it to prove the tight upper bound in Theorem \ref{th1}. In Section \ref{characterization}, we prove the uniqueness of the extremal structures for Theorem \ref{th1}. In Section \ref{SOne}, we prove Theorem \ref{th2}. In Section \ref{future}, we outline some directions for further research.

\section{Katona's cycle and the upper bound in Theorem \ref{th1}}\label{cycle}
For the purposes of this proof, for any edge $e_i\in E(M_n)$, we denote $e^0_i=l_i$ and $e^1_i=r_i$. Let $S_n$ denote the set of all permutations $\sigma$ of $[n]$, and let $\{0,1\}^n$ denote the set of all sequences $\tau=(\tau_1, \ldots, \tau_n)$ where $\tau_i\in \{0,1\}$ for each $1\leq i \leq n$. For each choice of $\sigma\in S_n$ and $\tau\in \{0,1\}^n$, we can define a cyclic order $C_\sigma^\tau$ of $E(M_n)$ as follows: $C_\sigma^\tau=\left(e^{(\tau_1)}_{\sigma(1)}, e^{(\tau_2)}_{\sigma(2)}, \ldots, e^{(\tau_n)}_{\sigma(n)}\right)$, where for each $i \in[n]$, $e_{\sigma(i)}^{(\tau_i)}=\left(e_{\sigma(i)}^{\tau_i},e_{\sigma(i)}^{\tau_i+1}\right)$, where $\tau_i+1$ is computed modulo $2$. Rotating a given cyclic order along the $n$ positions corresponding to the $n$ edges gives $n-1$ other cyclic orders that we say are \textit{equivalent} to the original cyclic order. To account for this notion of equivalence, we only consider cyclic orders $C_\sigma^\tau$ where $\sigma\in S_n$ is a permutation with $\sigma(n)=n$. Indeed, in the proof of the characterization of the extremal structures in Section \ref{characterization}, we implicitly identify $\sigma$ as a permutation in $S_{n-1}$.  Let $\mathscr{C}_n$ denote the set of these orders. It is clear to see that $|\mathscr{C}_n|=(n-1)!2^n$.
\\
\\
For a cyclic order $C_\sigma^\tau$, positive integers $p$, $s$ (with $p+s\leq n$), and $i\in [n]$, we define a $B$-interval beginning at the edge in position $i$ as follows: $B_i=B_i(\sigma,\tau,p,s)=\left(e^{(\tau_i)}_{\sigma(i)},e^{(\tau_{i+1})}_{\sigma(i+1)},\ldots,e^{(\tau_{i+p-1})}_{\sigma(i+p-1)},e^{\tau_{i+p}}_{\sigma(i+p)},\ldots,e^{\tau_{i+p+s-1}}_{\sigma(i+p+s-1)}\right)$. 
\\
\noindent
Similarly, we define an $R$-interval beginning at the edge in position $i$ as follows: $R_i=R_i(\sigma,\tau,p,s)=\left(e^{\tau_i+1}_{\sigma(i)},e^{\tau_{i+1}+1}_{\sigma(i+1)},\ldots,e^{\tau_{i+s-1}+1}_{\sigma(i+s-1)},e^{(\tau_{i+s})}_{\sigma(i+s)},\ldots,e^{(\tau_{i+p+s-1})}_{\sigma(i+p+s-1)}\right)$. Note that addition here is carried out modulo $n$, so for any $j$, $i+j=i+j-n$ if $i+j>n$. For fixed $p$ and $s$, each cyclic order thus has $n$ $B$-intervals and $n$ $R$-intervals. 
\\
\\
More informally, both $B$-intervals and $R$-intervals cover $p+s$ consecutive edges in a given cyclic order. However, reading clockwise, a $B$-interval contains both vertices from its first $p$ edges and only the vertex in the first position (in the cyclic order) from each of the last $s$ edges. Similarly, a $R$-interval contains only the vertex in the second position from each of its first $s$ edges, and both vertices from its last $p$ edges. When the context is clear (with regard to $\sigma$, $\tau$, $p$ and $s$), we will refer to $B$-intervals and $R$-intervals by their starting positions alone.  
\\
\\
Before we proceed to a proof of the upper bound in Theorem \ref{th1}, we illustrate the above definitions with an example. Let $n=6$, $p=1$, and $s=2$. Given permutation $\sigma=(\sigma(1),\ldots,\sigma(6))=(5,3,2,1,4,6)$ and $\tau=(0,1,1,0,1,0)$, the corresponding cylic order is $C_\sigma^\tau=((l_5,r_5),(r_3,l_3),(r_2,l_2),(l_1,r_1),(r_4,l_4),(l_6,r_6))$. The $B$-intervals $B_3$ and $B_5$ in this order are $((r_2,l_2),l_1,r_4)$ and $((r_4,l_4),l_6,l_5)$ respectively. Similarly, the $R$-intervals $R_2$ and $R_6$ in this order are $(l_3,l_2,(l_1,r_1))$ and $(r_6,r_5,(r_3,l_3))$ respectively.
\\
\\ 
We also clarify a mild abuse of notation regarding intervals that we will frequently employ for convenience of exposition. First, it is clear that both $B$ and $R$ intervals can be regarded as ordered sets of $2p+s$ vertices in $V(M_n)$. Secondly, both $B$ and $R$ intervals naturally correspond to members of $\mathcal{H}^{(p,s)}(n)$. In view of this correspondence, a $B$ or $R$-interval will often be identified with and refer to the (unordered vertex subsets of the) induced subgraph in $\mathcal{H}^{(p,s)}(n)$ that contain the $p$ edges and $s$ singleton vertices in that interval. This equivalence will typically be implied in statements that involve equality between or intersection of two intervals.
\\
\\
We now prove the upper bound in Theorem \ref{th1}. For the remainder of this section, let $p,s\geq 1$, $n\geq 2(p+s)$, and suppose that $\mathcal{F}\subseteq \mathcal{H}^{(p,s)}(n)$ is intersecting. 
\\
\\
\noindent
For a given cyclic arrangement $C_\sigma^\tau$, let $\mathcal{B}_\sigma^\tau$ and $\mathcal{R}_\sigma^\tau$ be the subfamilies of all members in $\mathcal{F}$ that can be ordered as $B$-intervals and $R$-intervals in $C_\sigma^\tau$ respectively. Let $\mathcal{F}_\sigma^\tau=\mathcal{B}_\sigma^\tau \cup \mathcal{R}_\sigma^\tau$. The following lemma, key to establishing the upper bound in Theorem \ref{th1}, is an analog of Katona's lemma used in his proof of the original EKR theorem. 
\begin{lem}\label{lem1}
$|\mathcal{F}_\sigma^\tau|\leq 2p+s$.
\end{lem}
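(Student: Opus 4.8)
My plan is to translate the lemma into a purely combinatorial statement about the starting positions of the intervals on the cycle $\mathbb{Z}_n$, and then to prove the resulting numerical bound by a gap-counting argument carried out \emph{directly} on the circle — which is what lets the proof reach down to $n\geq 2(p+s)$.

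First I would record the intersection pattern of the two interval types. At each position $k$ of $C_\sigma^\tau$ write $a_k$ for the first-listed vertex and $b_k$ for the second. By construction a $B$-interval contains $a_k$ at every one of the $p+s$ positions it covers, while an $R$-interval contains $b_k$ at every covered position. Hence two $B$-intervals (resp. two $R$-intervals) meet if and only if their length-$(p+s)$ position-spans overlap, i.e. their starting positions lie at cyclic distance at most $p+s-1$. For the mixed case, checking separately when a common first-vertex and when a common second-vertex occurs shows that both events reduce to the \emph{same} window, so that $B_i\cap R_j\neq\emptyset$ if and only if $(j-i)\bmod n\in\{-(p+s-1),\dots,p-1\}$. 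Letting $P$ and $Q$ be the sets of starting positions of the members of $\mathcal{F}$ realized in $C_\sigma^\tau$ as $B$- and $R$-intervals respectively, we have $|\mathcal{F}_\sigma^\tau|\leq |P|+|Q|$, so it suffices to bound $|P|+|Q|$ under these three conditions.

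If one of $P,Q$ is empty, the family is a single type, its position-spans are pairwise intersecting arcs of length $p+s$, and Katona's original lemma (applicable since $n\geq 2(p+s)$) yields at most $p+s\leq 2p+s$ of them. So assume both are nonempty. Fixing any $j\in Q$, the cross condition forces $P\subseteq[j-(p-1),\,j+(p+s-1)]$, an arc of $2p+s-1$ positions; dually, $Q$ lies in the set $Z$ of all $z$ for which the length-$(2p+s-1)$ window $[z-(p-1),\,z+(p+s-1)]$ contains $P$. The crux is to count $Z$: such a window contains $P$ exactly when its complementary arc, of length $c:=n-(2p+s-1)$, fits inside a single gap of $P$, so $|Z|=\sum_{g\geq c}(g-c+1)$, the sum being over gaps $g$ of $P$ of size at least $c$. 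Since the sizes of all gaps of $P$ sum to $n-|P|$, and since $Z\neq\emptyset$ guarantees at least one gap of size $\geq c$, writing $t\geq 1$ for the number of such large gaps gives $|P|+|Z|=|P|+\bigl(\sum_{\mathrm{large}}g\bigr)-t(c-1)\leq n-t(c-1)\leq n-(c-1)=2p+s$, where $c-1\geq 0$ because $n>2p+s$. This closes the bound.

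The step I expect to be the main obstacle — and the reason a naive reduction to Katona needs the stronger hypothesis $n\geq 4p+2s$ — is exactly this gap count. Because the interval length $p+s$ may be close to $n/2$, a pairwise-intersecting $P$ need not lie in a short arc: it can be spread almost antipodally, so that $P$ has two or more large gaps and one cannot simply cut the circle and argue on a line. The identity $|Z|=\sum_{g\geq c}(g-c+1)$ is engineered to absorb precisely this phenomenon, since each extra large gap contributes an additional $-(c-1)$ that compensates for the extra spread and keeps the total at $2p+s$. The remaining technical care lies in verifying the three intersection equivalences of the second paragraph under cyclic wraparound, and in checking that the correspondence between admissible windows $z\in Z$ and length-$c$ arcs disjoint from $P$ is a genuine bijection.
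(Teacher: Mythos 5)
Your proof is correct, but it follows a genuinely different route from the paper's. The paper's engine is a pivot parameter: it sets $k$ to be the minimum intersection size of two $B$-intervals in $\mathcal{B}_\sigma^\tau$, places two such intervals without loss of generality at positions $k$ and $p+s$, and then uses disjointness constraints (excluding whole ranges of starting indices and pairing up antipodal intervals) to prove the two correlated bounds $|\mathcal{B}_\sigma^\tau|\leq p+s-k+1$ and $|\mathcal{R}_\sigma^\tau|\leq p+k-1$, which sum to $2p+s$. You instead prove an exact cross-intersection criterion --- $B_i\cap R_j\neq\emptyset$ if and only if $(j-i)\bmod n$ lies in the window $\{-(p+s-1),\dots,p-1\}$ of size $2p+s-1$; your check that the shared-first-vertex and shared-second-vertex events give the same window is right --- and then bound $|P|+|Q|$ by the window/gap count $|P|+|Z|\leq n-t(c-1)\leq n-(c-1)=2p+s$, which is valid since $c-1\geq s\geq 1$ under $n\geq 2(p+s)$. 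Notably, in your main case only the cross-condition between the two interval types is used; the pairwise intersections inside $\mathcal{B}_\sigma^\tau$ and inside $\mathcal{R}_\sigma^\tau$ enter only in the degenerate one-type case via Katona's lemma, so you in fact prove a formally stronger statement, and your gap identity is exactly what absorbs the near-antipodal spread of starting positions that defeats a naive reduction to Katona's lemma below $n\geq 2(2p+s)$. What the paper's approach buys is the parameter $k$ and the structural conclusion that, in the equality case, $\mathcal{B}_\sigma^\tau$ and $\mathcal{R}_\sigma^\tau$ are runs of consecutive intervals with offsets governed by $k$ --- precisely the scaffolding on which Lemmas 2--4 and the strong-EKR characterization of Section 4 are built. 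Your count could also be pushed to recover that structure (equality forces $t=1$ and all other gaps of $P$ to vanish, so $P$ and $Q=Z$ are arcs), but as written it delivers only the bound.
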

\begin{proof}
    We bound $\mathcal{B}=\mathcal{B}_\sigma^\tau$ and $\mathcal{R}=\mathcal{R}_\sigma^\tau$ separately. Note first that by using Katona's original lemma, since $n\geq 2(p+s)$, the bounds $|\mathcal{B}|\leq p+s$ and $|\mathcal{R}|\leq p+s$ are immediate. We may assume that $|\mathcal{B}|\geq 2$, otherwise we get $|\mathcal{F}_\sigma^\tau|\leq p+s+1\leq 2p+s$, as $p\geq 1$. 
    \\
    \\
    Let $1\leq k=\textrm{min }_{1\leq i, j\leq n} |B_i\cap B_j|$. Without loss of generality, suppose that $B_k, B_{p+s}\in \mathcal{B}$. Clearly $|B_k\cap B_{p+s}|=k$. Now, if $1\leq i<k$ or $2(p+s)\leq i\leq n$, then $B_i\notin \mathcal{B}$. This is because in the former case, we get $1\leq |B_i\cap B_{p+s}|<k$, which contradicts the minimality of $k$ while in the latter case, we get $B_i\cap B_{p+s}=\emptyset$. Similarly, if $p+s<i\leq p+s+k-1$, then $1\leq |B_i\cap B_k|<k$, again contradicting the minimality of $k$ and thus implying that $B_i\notin \mathcal{B}$. Finally, for $k<i<p+s$, at most one out of the disjoint pair $(B_i,B_{p+s+i})$ can be in $\mathcal{B}$. This gives us $|\mathcal{B}|\leq p+s-(k-1)$. Indeed, using Katona's bound, we can assume that $k\leq s+1$.
    \\
    \\
    To bound $\mathcal{R}$, we begin by noting that for any $2(p+s)\leq i\leq n$, $R_i\cap B_{p+s}=\emptyset$, implying that $R_i\notin \mathcal{R}$. Additionally, if $2p+s\leq i\leq 2(p+s)-1$, then $R_i\notin \mathcal{R}$, as $R_i\cap B_{p+s}=\emptyset$. Similarly, for any $k+p\leq i\leq k+p+s-1$,  $R_i\notin \mathcal{R}$ as $R_i\cap B_k=\emptyset$. Finally, for each $1\leq i\leq p+k-1$, at most one out of the disjoint pair $(R_i,R_{p+s+i})$ can be a member of $\mathcal{R}$, which implies a bound of $|\mathcal{R}|\leq p+k-1$. Alongside the bound for $|\mathcal{B}|$, this implies the required bound for $\mathcal{F}_\sigma^\tau$.
\end{proof}
\noindent
The upper bound now follows from the following double counting argument. For each $F\in \mathcal{H}^{(p,s)}(n)$, $F$ is a $B$-interval in exactly $2^p p! s! 2^{n-p-s} (n-p-s)!=2^{n-s}p!s!(n-p-s)!$ cyclic orders, and is also an $R$-interval in the same number of cyclic orders. Using Lemma \ref{lem1} and Equation \ref{eq2}, we get:
\begin{align}
|\mathcal{F}|&\leq \dfrac{(2p+s) (n-1)! 2^n}{2^{n-s+1}p!s! (n-p-s)!} \nonumber \\ \nonumber
       &=(2p+s)\dfrac{(n-1)!}{p!s!(n-p-s)!}2^{s-1}\\ \nonumber
       &=|\mathcal{H}_x^{(p,s)}(n)|,\qedhere
\end{align} for any $x\in V(M_n)$. This proves that $\mathcal{H}^{(p,s)}(n)$ is EKR for $n\geq 2(p+s)$. We now proceed to show, in the next section, that it is \textit{strongly} EKR when $n>2(p+s)$.  

\section{Characterization of extremal structures}\label{characterization}

Suppose that $n> 2(p+s)$ and that $|\mathcal{F}|=|\mathcal{H}^{(p,s)}_x(n)|$. This immediately implies that for each cyclic order $C_\sigma^\tau\in \mathscr{C}_n$, $|\mathcal{F}_\sigma^\tau|=2p+s$. Thus, we have $p\leq |\mathcal{B}_\sigma^\tau|\leq p+s$ and $p\leq |\mathcal{R}_\sigma^\tau|\leq p+s$.
\\
\\
Before we proceed with the proof of structural uniqueness of the extremal families, we recall some notation from the proof of Lemma \ref{lem1}. Given a cyclic arrangement $\C_\sigma^\tau$, let $k=k(\sigma,\tau,p,s)$ be defined as the smallest size of the intersection of two $B$-intervals in $\mathcal{B}_\sigma^\tau$. (If $|\mathcal{B}_\sigma^\tau|=1$, then $k=p+s$.) From the proof of Lemma \ref{lem1}, we may also conclude that $1\leq k\leq s+1$.
\\
\\
Our argument will demonstrate that $\mathcal{F}=\mathcal{H}^{(p,s)}_x(n)$ for some $x\in V(M_n)$, and will progress through a series of three lemmas that we state and prove below. Our first lemma characterizes the local structure of the family $\mathcal{F}$ within each cyclic arrangement, and builds on the ideas used in the proof of Lemma \ref{lem1}.
\begin{lem}\label{lem2}
    For a given cyclic arrangement $C_\sigma^\tau$, if $|\mathcal{F}_\sigma^\tau|=2p+s$, then there exists $i\in [n]$ and $k=k(\sigma,\tau,p,s)$ such that $\mathcal{B}_\sigma^\tau=\{B_i,B_{i+1},\ldots,B_{i+p+s-k}\}$ and $\mathcal{R}_\sigma^\tau=\{R_{i-(k-1)},\ldots,R_{i+p-1}\}$.
\end{lem}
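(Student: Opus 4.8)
The plan is to start from the tightness forced by $|\mathcal{F}_\sigma^\tau|=2p+s$ and to sharpen the counting in Lemma \ref{lem1} into exact structural statements. Writing $\mathcal{B}=\mathcal{B}_\sigma^\tau$ and $\mathcal{R}=\mathcal{R}_\sigma^\tau$, the two bounds $|\mathcal{B}|\leq p+s-k+1$ and $|\mathcal{R}|\leq p+k-1$ proved there sum to exactly $2p+s$, so the hypothesis $|\mathcal{F}_\sigma^\tau|=2p+s$ forces both to be equalities, namely $|\mathcal{B}|=p+s-k+1$ and $|\mathcal{R}|=p+k-1$. The whole argument then reduces to locating these two families exactly.

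The engine will be three exact disjointness criteria read off from the interval definitions. First I would record that two $B$-intervals (resp.\ two $R$-intervals) meet if and only if their underlying length-$(p+s)$ position-arcs meet: at every shared position the two intervals either share a full edge or share the common first- (resp.\ second-) position vertex, so any position-overlap already gives a vertex in common. Hence $B_a\cap B_{a'}=\emptyset$ (resp.\ $R_b\cap R_{b'}=\emptyset$) exactly when the cyclic shift lies in $[p+s,n-p-s]$. The crucial and more delicate computation is the mixed criterion: since a $B$-interval takes first-position vertices as its singletons while an $R$-interval takes second-position ones, a position that is a singleton for both contributes nothing. Carrying out the position bookkeeping (full--full, full--singleton, singleton--singleton) I expect to obtain the clean statement that $R_b\cap B_a=\emptyset$ if and only if $b-a\in[p,n-p-s]\pmod n$; equivalently, $R_b$ meets $B_a$ precisely when $b\in[a-(p+s-1),a+p-1]$.

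With these criteria in hand the two structural claims follow quickly. For $\mathcal{B}$, assume first $|\mathcal{B}|\geq 2$ (the case $|\mathcal{B}|=1$ being immediate): the members pairwise intersect, so their starting positions have pairwise cyclic distance at most $p+s-1$, and the minimal intersection $k$ is attained by the pair of largest cyclic distance, which must therefore equal $p+s-k$. Because $n>2(p+s)$ gives $p+s-k<n/2$, a set of starting positions of diameter $p+s-k$ on the cycle must lie in an arc of that length, i.e.\ in $p+s-k+1$ consecutive positions; as $|\mathcal{B}|=p+s-k+1$, this arc is completely filled, giving $\mathcal{B}=\{B_i,\ldots,B_{i+p+s-k}\}$ with $i$ the first position of the block. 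For $\mathcal{R}$, every $R_b\in\mathcal{R}$ must meet each of $B_i,\ldots,B_{i+p+s-k}$, so the mixed criterion forces $b\in\bigcap_{a=i}^{i+p+s-k}[a-(p+s-1),a+p-1]=[i-(k-1),i+p-1]$, an interval of exactly $p+k-1$ positions; since $|\mathcal{R}|=p+k-1$, this interval is also filled and $\mathcal{R}=\{R_{i-(k-1)},\ldots,R_{i+p-1}\}$, as required.

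The main obstacle is the mixed disjointness criterion: the first-versus-second-position mismatch in the singleton parts means intersection is \emph{not} governed by arc-overlap alone, and the case analysis must be carried out carefully, including the cyclic wrap-around, to pin down the exact range $[p,n-p-s]$. A secondary point to treat cleanly is the containment step for $\mathcal{B}$, where the strict inequality $n>2(p+s)$ --- equivalently $p+s-k<n/2$ --- is exactly what prevents a pairwise-close set of starting positions from wrapping around the cycle; this is precisely why the strong-EKR conclusion requires strict inequality. I would also note at the outset that in the equality case with $|\mathcal{B}|\geq 2$ the value $k$ is an attained intersection size, and the attained sizes that are at most $s+1$ are exactly $\{1,\ldots,s\}$, so $k\leq s$ and hence $p+s-k\geq p$; this guarantees that the "$p+s-(\text{shift})$" regime of the $B$-$B$ intersection formula applies throughout the diameter argument.
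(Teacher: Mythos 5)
Your route is genuinely different from the paper's: you compute explicit intersection criteria (the $B$--$B$, $R$--$R$, and mixed $B$--$R$ disjointness rules, the last of which you state correctly) and then finish with an arc-covering count, whereas the paper never touches intersection sizes beyond disjointness -- it recycles the ``at most one of each antipodal pair'' bookkeeping from Lemma \ref{lem1} and proves contiguity of $\mathcal{B}_\sigma^\tau$ and $\mathcal{R}_\sigma^\tau$ by a minimal-missing-index contradiction using $n>2(p+s)$. Most of your plan is sound; in particular the identity $\bigcap_{a=i}^{i+p+s-k}[a-(p+s-1),a+p-1]=[i-(k-1),i+p-1]$ is exactly the right way to pin down $\mathcal{R}_\sigma^\tau$ once $\mathcal{B}_\sigma^\tau$ is located.

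There is, however, a genuine gap, and it sits precisely at the case you dispose of in your final paragraph. Reading $k$ as the minimum \emph{vertex-set} intersection (as you do), your argument needs $k\le s$, which you deduce from ``$k\le s+1$, and $s+1$ is not an attained size.'' But the input $k\le s+1$ is not available on that reading: the proof of Lemma \ref{lem1} establishes it only under the implicit assumption that the minimizing pair of $B$-intervals has shift at least $p$ (so that its vertex intersection is $p+s-d$ rather than $2(p-d)+s$), and for configurations in which \emph{every} pair of $B$-intervals in $\mathcal{B}_\sigma^\tau$ has shift less than $p$ it is false. Concretely, take $p=2$, $s=1$, $n=7$, the canonical order, and $\mathcal{F}=\{B_1,B_2\}\cup\{R_7,R_1,R_2\}$: this family is intersecting, every member is an interval, so $|\mathcal{F}_\iota^{\omicron}|=5=2p+s$, yet the minimum vertex intersection among the $B$'s is $|B_1\cap B_2|=3=s+2$, and both of your equalities $|\mathcal{B}|=p+s-k+1$ and $|\mathcal{R}|=p+k-1$ fail for it. (Your proof uses only these local hypotheses, so this example breaks the argument as written.) The configuration is not pathological: it is exactly the $k=s+1$ structure of Lemma \ref{lem2} (compare Figure \ref{kspone}), \emph{provided} $k$ is interpreted as the number of positions common to two $B$-intervals, i.e.\ $p+s$ minus the largest shift, rather than as a vertex count; that positional reading is the one under which the paper's statement and proof are coherent, since the paper only ever uses disjointness and minimality of $k$. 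With that reinterpretation your arc-covering argument closes up essentially verbatim -- the diameter of $\mathcal{B}_\sigma^\tau$ is $p+s-k$ by definition, and the troublesome branch of your intersection formula never needs to be invoked. Two smaller points: the case $|\mathcal{B}_\sigma^\tau|=1$ is not ``immediate'' (one must first force $p=1$ from $|\mathcal{R}_\sigma^\tau|=2p+s-1\le p+s$, after which your machinery does finish quickly), and your claim that the minimal intersection is attained by the pair of largest distance is fine but the subsequent ``which must therefore equal $p+s-k$'' is exactly the step that fails in the example above.
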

\begin{proof}
Let $\mathcal{B}=\mathcal{B}_\sigma^\tau$ and $\mathcal{R}=\mathcal{R}_\sigma^\tau$. We first tackle the simple case where $|\mathcal{B}|=1$. Let $\mathcal{B}=\{B_i\}$ for some $i\in [n]$. It is clear from the argument made in the proof of Lemma \ref{lem1} that $p=1$ and $|\mathcal{R}|=p+s=s+1$. A well-known corollary of Katona's lemma \footnote{See Lemma 10 from \cite{Keevash} for a short proof of this corollary.} tells us that $\mathcal{R}=\{R_j,\ldots, R_{j+s}\}$ for some $j\in [n]$. We claim that $j+s=i$ (addition modulo $n$); indeed, if $i\in [n]\setminus [j,j+s]$, then $B_i\cap R_j=\emptyset$ and if $i\in [j,j+s-1]$, then $B_i\cap R_{j+s}=\emptyset$. This settles the case (since $k$ here is just $|B_i|=p+s=s+1$). Thus, we may assume $|\mathcal{B}|\geq 2$. Without loss of generality, suppose that $B_k, B_{p+s}\in \mathcal{B}$ with $|B_k\cap B_{p+s}|=k$.
\\
\\
Since $|\mathcal{F}_\sigma^\tau|=2p+s$, $|\mathcal{B}|=p+s-k+1$ and $|\mathcal{R}|=p+k-1$ by Lemma \ref{lem1}. Also note that by our proof of Lemma \ref{lem1}, for each $k<i<p+s$, exactly one out of the disjoint pair $(B_i, B_{p+s+i})$ must be in $\mathcal{B}$. Suppose $j$ is the smallest index such that $k<j<p+s$ and $B_j \notin \mathcal{B}$. Then $B_{j-1}, B_{p+s+j}\in \mathcal{B}$. However, as $n>2(p+s)$, $B_{j-1} \cap B_{p+s+j} =\emptyset$, a contradiction. Thus,
\[
\B=\{ B_k, B_{k+1}, B_{k+2}, \ldots, B_{p+s} \}.
\]
\noindent
Now, by our proof of Lemma \ref{lem1}, for each $1\leq i\leq p+k-1$, exactly one out of the disjoint pair $(R_i, R_{p+s+i})$ is in $\mathcal{R}$. However, since $k \leq s+1$, we have $p+k \leq p+s+1$, implying that $B_k \cap R_{p+s+1}=\emptyset$. Thus, $R_1 \in \mathcal{R}$. Finally, for $2\leq i\leq p+k-1$, let $i$ be minimum such that $R_i\notin \mathcal{R}$. Then $R_{i-1}\in \mathcal{R}$ and $R_{p+s+i}\in \mathcal{R}$. Again, as $n>2(p+s)$, $R_{i-1}\cap R_{p+s+i}=\emptyset$, a contradiction. Thus,
\[
\R=\{ R_1, R_2, \ldots, R_{p+k-1} \}.
\]
\end{proof}
\noindent
We now introduce some important notation that is required for the next lemmas. First, let $C_\iota^\omicron$ be the \textit{canonical} cyclic order, where $\iota\in S_n$ is the identity permutation, and $\omicron$ is the all-zeroes sequence (of length $n$). For any cyclic order $C_\sigma^\tau$, a \textit{transposition} $t_{i,j}$ is an operation that exchanges the positions of $e_{\sigma(i)}$ and $e_{\sigma(j)}$ in the cyclic order, while retaining the orders of the respective vertices within each of the two edges. Note that if $C_\pi^{\phi}$ is the cyclic order obtained from $C_\sigma^\tau$ using a transposition $t_{i,j}$, then $\pi=\sigma\circ (i, j)$, i.e., the transposition acts by interchanging the elements in positions $i$ and $j$. Additionally, we have $\phi_i=\tau_j$, $\phi_j=\tau_i$, and $\phi_k=\tau_k$ for all $k\in [n]\setminus \{i,j\}$. If $j=i+1$ for $1\leq i\leq n-1$, then we call $t_i=t_{i,i+1}$ an \textit{adjacent transposition}. Let $C_{\pi_i}^\phi$ denote the cyclic order obtained from $C_\sigma^\tau$ via the transposition $t_i$, where $\pi_i=\sigma\circ (i, i+1)$.
\\
\\  
Finally, for a given cyclic order $C_\sigma^\tau$ and $1\leq i\leq n$, a \textit{swap} $s_i$ is an operation that exchanges the positions of the vertices in $e_{\sigma(i)}$. Let $C_\sigma^{\tau^i}$ denote the cyclic order obtained using a swap $s_i$, where $\tau^i_i=(\tau_i+1) (\textrm{mod } 2)$ and $\tau^i_k=\tau_k$ for $1\leq k\neq i\leq n$. Note that the permutation $\sigma$ is unaffected by the swap $s_i$.
\\
\\
Each of the following two lemmas involves the use of transpositions and/or swaps on a given cyclic order to obtain a new cyclic order; to avoid ambiguity, we adopt the notation $B_i(\sigma,\tau)=B_i(\sigma,\tau,p,s)$ and $R_i(\sigma,\tau)=R_i(\sigma,\tau,p,s)$ to respectively identify $B$-intervals and $R$-intervals in a cyclic order $C_\sigma^\tau$ that begin at the edge in position $i$ of that cyclic order. 
\\
\\
The following lemma, stated and proved below, demonstrates that for each $C_\sigma^\tau\in \mathscr{C}_n$, $\mathcal{F}_\sigma^\tau$ has a ``star structure'', i.e. there exists $x\in V(M_n)$ such that $x\in \bigcap\limits_{F\in \mathcal{F}_\sigma^\tau} F$. In this case, we say that $C_\sigma^\tau$ is \textit{centered} at $x$. 
\begin{lem}\label{lem3}
For each $C_\sigma^\tau$, $k\in \{1,s+1\}$.
\end{lem}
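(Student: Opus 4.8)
The plan is to first recast the statement $k\in\{1,s+1\}$ as the assertion that $\mathcal{F}_\sigma^\tau$ has a common vertex, and then to rule out the intermediate values $2\le k\le s$ (which only arise when $s\ge 2$) by moving to neighbouring cyclic orders. For the first step I would use the explicit block description from Lemma \ref{lem2}: normalising so that $\mathcal{B}=\mathcal{B}_\sigma^\tau=\{B_k,\dots,B_{p+s}\}$ and $\mathcal{R}=\mathcal{R}_\sigma^\tau=\{R_1,\dots,R_{p+k-1}\}$, I would compute $\bigcap\mathcal{B}$ and $\bigcap\mathcal{R}$ directly. Writing $f_q$ for the vertex carried by $B$-intervals at a singleton position in position $q$ (i.e. $e^{\tau_q}_{\sigma(q)}$) and $g_q$ for its partner $e^{\tau_q+1}_{\sigma(q)}$, a position-by-position check shows that $\bigcap\mathcal{B}$ consists of $f_{p+s},\dots,f_{p+s+k-1}$, together with $g_{p+s}$ exactly when $k=s+1$, while $\bigcap\mathcal{R}$ consists of $g_{p+k-1},\dots,g_{p+s}$, together with $f_{p+s}$ exactly when $k=1$. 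Hence $\bigcap\mathcal{F}_\sigma^\tau=\bigcap\mathcal{B}\cap\bigcap\mathcal{R}$ is nonempty if and only if $k\in\{1,s+1\}$, in which case the common vertex is an endpoint of the edge in position $p+s$ (namely $f_{p+s}$ when $k=1$ and $g_{p+s}$ when $k=s+1$). This reduces Lemma \ref{lem3} to showing that $\mathcal{F}_\sigma^\tau$ always has a common vertex, equivalently that $2\le k\le s$ cannot occur.

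For the second step I would argue by contradiction: assume some $C_\sigma^\tau$ has $2\le k\le s$, so that by the computation above $\mathcal{B}$ and $\mathcal{R}$ are ``centred'' on disjoint vertex sets, all $f$-vertices for $\mathcal{B}$ and all $g$-vertices for $\mathcal{R}$. I would then pass to an adjacent order via the swap and transposition operations defined above. Applying the swap $s_{p+s}$ at the central position keeps the extreme intervals $B_{p+s}$ and $R_1$ (for which position $p+s$ is an edge position) as members of $\mathcal{F}_\sigma^{\tau^{p+s}}$, while the inner-boundary intervals $B_k$ and $R_{p+k-1}$ (for which position $p+s$ is a singleton position) drop out. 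One then reapplies Lemma \ref{lem2} to the new order and uses extremality, $|\mathcal{F}_\sigma^{\tau^{p+s}}|=2p+s$, to constrain its value $k'=k(\sigma,\tau^{p+s},p,s)$. The aim is to show that the block of surviving $B$-intervals and the block of surviving $R$-intervals cannot be realigned into a valid Lemma \ref{lem2} configuration of full size $2p+s$ unless the original order was already centred, i.e. unless $k\in\{1,s+1\}$.

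The main obstacle is that a single swap does not immediately produce a disjoint pair of members of $\mathcal{F}$: after $s_{p+s}$ the survivors $B_{p+s}$ and $R_1$ still intersect, and the further members needed to refill $\mathcal{F}_\sigma^{\tau^{p+s}}$ to size $2p+s$ are members of $\mathcal{F}$ that are not intervals of the original order, so their membership is not decided by $C_\sigma^\tau$ in isolation. Thus the crux is a rigidity argument: I would track the forced aligned blocks of Lemma \ref{lem2} along a short chain of swaps $s_i$ (and, where the underlying edges must be permuted, adjacent transpositions $t_i$), using at each step that any interval disjoint from an already-located member of $\mathcal{F}$ must itself be excluded, and then show that an intermediate $k$ propagates an inconsistency in the location of the center. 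Particular care is needed in the regime $2p+2s\le n<4p+2s$, where two ``opposite'' intervals of a cyclic order can overlap; here I would lean on the inequalities $n>2(p+s)$ and $k\le s+1$ exactly as in the proofs of Lemmas \ref{lem1} and \ref{lem2} to guarantee that the relevant opposite pairs are genuinely disjoint, so that the exclusion step remains valid.
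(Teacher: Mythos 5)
Your first step is correct and is a faithful reformulation of the target: with the blocks of Lemma \ref{lem2} normalised as $\mathcal{B}=\{B_k,\dots,B_{p+s}\}$ and $\mathcal{R}=\{R_1,\dots,R_{p+k-1}\}$, the position-by-position computation of $\bigcap\mathcal{B}$ and $\bigcap\mathcal{R}$ does show that $\mathcal{F}_\sigma^\tau$ has a common vertex precisely when $k\in\{1,s+1\}$, which is exactly how the paper interprets the lemma (every extremal cyclic order is ``centered''). But this is only a restatement; the entire content of Lemma \ref{lem3} is the exclusion of $2\le k\le s$, and that is the part your proposal does not actually prove. Your second step is a plan rather than an argument: ``the aim is to show that the blocks cannot be realigned'' and ``I would track the forced aligned blocks along a short chain of swaps'' name the desired conclusion without supplying the mechanism, and you yourself identify the obstruction (the members refilling $\mathcal{F}_\sigma^{\tau^{p+s}}$ to size $2p+s$ need not be intervals of the original order, so nothing known about $\mathcal{F}$ excludes them) without overcoming it. Note also that the intervals at positions $k$ and $p+k-1$ do not simply ``drop out'' after the swap: the original subgraphs stop being intervals, but the new intervals at those positions are perfectly eligible members of $\mathcal{F}$.

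Worse, your specific opening move provably cannot yield the contradiction on its own. After the swap $s_{p+s}$, the known surviving members are $B_{s+1},\dots,B_{p+s}$ and $R_1,\dots,R_p$, all of which contain \emph{both} endpoints of the edge in position $p+s$. The configuration in which the new order has $k'=s+1$, with $\mathcal{B}_\sigma^{\tau^{p+s}}=\{B_{s+1},\dots,B_{p+s}\}$ and $\mathcal{R}_\sigma^{\tau^{p+s}}=\{R_1,\dots,R_{p+s}\}$, centered at the vertex $e^{\tau_{p+s}}_{\sigma(p+s)}$, satisfies every disjointness constraint derivable from the original order: one checks that the new intervals $R_{p+1},\dots,R_{p+s}$, which after the double flip carry $e^{\tau_{p+s}}_{\sigma(p+s)}$ as a singleton, intersect each of $B_k,\dots,B_{p+s}$ and $R_1,\dots,R_{p+k-1}$ of the original order. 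So after one swap no contradiction is available, and your continuation is left entirely unspecified. The paper resolves exactly this difficulty by making a $k$-dependent move that is stronger than any adjacent transposition or swap: it applies the two non-adjacent transpositions $t_{1,p+k-1}$ and $t_{p+s+k-1,2(p+s)-1}$ simultaneously, and then shows, by testing candidate intervals of the new order for disjointness against the four known members $B_k$, $B_{p+s}$, $R_1$, $R_{p+k-1}$ (together with Lemma \ref{lem2} applied in the new order), that at most $(p+s-k)+(p+k-1)=2p+s-1$ intervals of the new order can lie in $\mathcal{F}$, contradicting extremality. Some such tailored choice of operations is precisely what your sketch is missing.
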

\begin{proof}
    Consider the canonical cyclic order $C_\iota^\omicron$, and let $k=k(\iota,\omicron,p,s)$. By way of contradiction, suppose that $2\leq k\leq s$. Without loss of generality (relabeling if necessary), suppose that $\mathcal{B}_\iota^\omicron=\{B_k,B_{k+1},\ldots,B_{p+s}\}$. By Lemma \ref{lem2}, we also know that $\mathcal{R}_\iota^\omicron=\{R_1,\ldots,R_{p+k-1}\}$. Now consider the cyclic order $C_\pi^\phi$ obtained by applying the transpositions $t_{1,p+k-1}$ and $t_{p+s+k-1,2(p+s)-1}$ to the canonical cyclic order. Note that as $p\geq 1$ and $2\leq k\leq s$, we have $1<p+k-1<p+s+k-1<2(p+s)-1$. (Note also that $\phi=\omicron$.)
    \\
    \\
    Let $k'=k'(\pi,\phi,p,s)$. We now proceed to prove that $|\mathcal{F}_\pi^\phi|<2p+s$, which will complete the proof by contradiction. We begin by showing that for each $i\in [n]\setminus [k+1,p+s]$, $B_i(\pi,\phi)\notin \mathcal{B}_\pi^\phi$.
    \\
    \\
    First, note that for each $2(p+s)\leq i\leq n$, $B_i(\pi,\phi)\notin \mathcal{B}_\pi^\phi$, since $B_i(\pi,\phi)\cap B_{p+s}(\iota,\omicron)=\emptyset$. Analogously, if $2\leq i\leq k-1$, then $B_i (\pi,\phi)\cap R_{p+k-1}(\iota,\omicron)=\emptyset$, hence $B_i(\pi,\phi)\notin \mathcal{B}_\pi^\phi$. 
    \\
    \\
    Next, let $i=k$. By definition, $B_k(\pi,\phi)=(e_k,\ldots,e_{p+k-2},e_1,l_{p+k},\ldots, l_{p+s+k-2},l_{2(p+s)-1})$. We also have $R_{p+k-1}(\iota,\omicron)=(r_{p+k-1},\ldots,r_{p+k+s-2},e_{p+s+k-1},\ldots,e_{2p+s+k-2})$. Clearly, $B_k(\pi,\phi)\cap R_{p+k-1}(\iota,\omicron)=\emptyset$, thus $B_k(\pi,\phi)\notin \mathcal{B}_\pi^\phi$.  Now, let $i=1$. If $B_i(\pi,\phi)\in \mathcal{B}_\pi^\phi$, then using Lemma \ref{lem2}, $\mathcal{B}_\pi^\phi=\{B_1(\pi, \phi) \}$ and consequently, we have $R_n(\pi,\phi)\in \mathcal{R}_\pi^\phi$. This is a contradiction since $R_n(\pi,\phi)\cap B_{p+s}(\iota,\omicron)=\emptyset$. 
    \\
    \\
    Next, we consider the case when $i\in [p+s+1,p+s+k-1]$. For any $i$ in this range, $R_{i-(p+s)}(\iota,\omicron)\cap B_i(\pi,\phi)=\emptyset$, thus implying that $B_i(\pi,\phi)\notin \mathcal{B}_\pi^\phi$. 
    \\
    \\
    Finally, we consider the case when $i\in [p+s+k, 2(p+s)-1]$. Let $j\geq 1$ be minimum such that $B_{p+s+k-1+j}(\pi,\phi)\in \mathcal{B}_\pi^\phi$. Since $B_{2(p+s)}(\pi,\phi)\notin \mathcal{B}_\pi^\phi$, we know from Lemma \ref{lem2} that $|\mathcal{B}_\pi^\phi|\leq (2(p+s)-1)-(p+s+k-1+j)+1=p+s-(k+j)+1$. Note that this implies $k'\geq k+j$. Using Lemma \ref{lem2}, if $j'$ is the minimum index such that $R_{j'}(\pi,\phi)\in \mathcal{R}_\pi^\phi$, then $j'= (p+s+k-1+j)-(k'-1)=p+s+k+j-k'\leq p+s$. Consequently, $R_m(\pi,\phi)\in \mathcal{R}_\pi^\phi$, where $m=j'+(p+k'-1)-1\geq p+s+2$. In particular, we have $R_{p+s+1}(\pi,\phi)\in \mathcal{R}_\pi^\phi$, which is a contradiction since $R_{p+s+1}(\pi, \phi)\cap R_{1}(\iota,\omicron)=\emptyset$.
    \\
    \\
    We can now conclude that all of the $B$-sets in $\mathcal{B}_\pi^\phi$ have starting indices in the interval $[k+1,p+s]$. However, $R_{p+k}(\pi,\phi)\notin \mathcal{R}_\pi^\phi$ since $R_{p+k}(\pi,\phi)\cap B_k(\iota,\omicron)=\emptyset$. Recall also that $R_n(\pi,\phi)\notin \mathcal{R}_\pi^\phi$. It follows from Lemma \ref{lem2} that all of the $R$-sets in $\mathcal{R}_\pi^\phi$ have starting indices that lie in $[1, p+k-1]$. Therefore, $|\mathcal{F}_\pi^\phi|\leq (p+s-k)+(p+k-1)\leq 2p+s-1$, a contradiction. 
\end{proof}
\noindent
The two figures below illustrate the two extremal possibilities from Lemma \ref{lem3} for any given cyclic order. In each of the figures, we have $n=12$, $p=3$ and $s=2$, and the cyclic order considered for the illustration is $C_\iota^\omicron$. The portion of each interval that includes singletons (left vertices from the last $s$ positions for blue intervals, right vertices from the first $s$ positions for red intervals) is denoted by dashed lines. For clarity, only the first and last blue and red intervals are drawn. In Figure \ref{kone}, we have $k=1$, and $C_\iota^\omicron$, with $\mathcal{B}=\{B_8,B_9,B_{10},B_{11},B_{12}\}$ and $\mathcal{R}=\{R_{8},R_{9},R_{10}\}$, is centered at $l_n$. Similarly, in Figure \ref{kspone}, $C_\iota^\omicron$ is centered at $r_n$. 
\begin{figure}[H]
\centering
\begin{tikzpicture}[scale=0.5]
    \foreach \x [count=\p] in {0,...,11} {
        \node[shape=circle,draw=black,scale=0.5] (13-\p) at (-\x*30+60:5) {$e_{\p}$};};   
 \draw [blue,very thick,dashed, domain=75:135] plot ({5.6*cos(\x)}, {5.6*sin(\x)});
\draw [blue,very thick,dashed, domain=75:135] plot ({4.4*cos(\x)}, {4.4*sin(\x)});
 \draw [blue,very thick, domain=135:225] plot ({5.6*cos(\x)}, {5.6*sin(\x)});
\draw [blue,very thick, domain=135:225] plot ({4.4*cos(\x)}, {4.4*sin(\x)});
\draw [blue,very thick,dashed, domain=4.4:5.6] plot ({\x*cos(75)}, {\x*sin(75)});
\draw [blue,very thick, domain=4.4:5.6] plot ({\x*cos(225)}, {\x*sin(225)});
\draw [blue] ({6.6*cos(210)},{6.6*sin(210)}) node[label=center:\large $B_{8}$] {}; 
 \draw [blue,very thick,dashed, domain=-45:15] plot ({5.8*cos(\x)}, {5.8*sin(\x)});
\draw [blue,very thick,dashed, domain=-45:15] plot ({4.2*cos(\x)}, {4.2*sin(\x)});
 \draw [blue,very thick, domain=15:105] plot ({5.8*cos(\x)}, {5.8*sin(\x)});
\draw [blue,very thick, domain=15:105] plot ({4.2*cos(\x)}, {4.2*sin(\x)});
\draw [blue,very thick,dashed, domain=4.2:5.8] plot ({\x*cos(-45)}, {\x*sin(-45)});
\draw [blue,very thick, domain=4.2:5.8] plot ({\x*cos(105)}, {\x*sin(105)});
\draw [blue] ({6.7*cos(90)},{6.7*sin(90)}) node[label=center:\large $B_{12}$] {}; 
 \draw [red,ultra thick, dashed, domain=165:227] plot ({6*cos(\x)}, {6*sin(\x)});
\draw [red,ultra thick, dashed, domain=165:227] plot ({4*cos(\x)}, {4*sin(\x)});
 \draw [red,ultra thick, domain=73:165] plot ({6*cos(\x)}, {6*sin(\x)});
\draw [red,ultra thick, domain=73:165] plot ({4*cos(\x)}, {4*sin(\x)});
\draw [red,ultra thick, domain=4:6] plot ({\x*cos(73)}, {\x*sin(73)});
\draw [red,ultra thick, dashed, domain=4:6] plot ({\x*cos(227)}, {\x*sin(227)});
\draw [red] ({3.5*cos(210)},{3.5*sin(210)}) node[label=center:\large $R_{8}$] {}; 

 \draw [red,ultra thick, dashed, domain=105:165] plot ({6.2*cos(\x)}, {6.2*sin(\x)});
\draw [red,ultra thick, dashed, domain=105:165] plot ({3.8*cos(\x)}, {3.8*sin(\x)});
\draw [red,ultra thick, domain=13:105] plot ({6.2*cos(\x)}, {6.2*sin(\x)});
\draw [red,ultra thick, domain=13:105] plot ({3.8*cos(\x)}, {3.8*sin(\x)});
\draw [red,ultra thick, domain=3.8:6.2] plot ({\x*cos(13)}, {\x*sin(13)});
\draw [red,ultra thick, dashed, domain=3.8:6.2] plot ({\x*cos(165)}, {\x*sin(165)});
\draw [red] ({3.2*cos(150)},{3.2*sin(150)}) node[label=center:\large $R_{10}$] {}; 

\end{tikzpicture}
\caption{Cyclic order $C_\iota^\omicron$ centered at $l_n$ (with $k=1$).}
\label{kone}
\end{figure}

\begin{figure}[H]
\centering
    \begin{tikzpicture}[scale=0.5]
    \foreach \x [count=\p] in {0,...,11} {
        \node[shape=circle,draw=black,scale=0.5] (13-\p) at (-\x*30+60:5) {$e_{\p}$};};   
  \draw [red,very thick, domain=75:167.5] plot ({5.6*cos(\x)}, {5.6*sin(\x)});
\draw [red,very thick, domain=75:167.5] plot ({4.4*cos(\x)}, {4.4*sin(\x)});
 \draw [red,very thick, dashed, domain=167.5:225] plot ({5.6*cos(\x)}, {5.6*sin(\x)});
\draw [red,very thick, dashed, domain=167.5:225] plot ({4.4*cos(\x)}, {4.4*sin(\x)});
\draw [red,very thick, domain=4.4:5.6] plot ({\x*cos(75)}, {\x*sin(75)});
\draw [red,very thick, dashed, domain=4.4:5.6] plot ({\x*cos(225)}, {\x*sin(225)});
\draw [red] ({6.6*cos(210)},{6.6*sin(210)}) node[label=center:\large $R_{8}$] {}; 
 \draw [red,very thick, domain=-45:45] plot ({5.8*cos(\x)}, {5.8*sin(\x)});
\draw [red,very thick, domain=-45:45] plot ({4.2*cos(\x)}, {4.2*sin(\x)});
 \draw [red,very thick, dashed, domain=45:105] plot ({5.8*cos(\x)}, {5.8*sin(\x)});
\draw [red,very thick, dashed, domain=45:105] plot ({4.2*cos(\x)}, {4.2*sin(\x)});
\draw [red,very thick, domain=4.2:5.8] plot ({\x*cos(-45)}, {\x*sin(-45)});
\draw [red,very thick, dashed, domain=4.2:5.8] plot ({\x*cos(105)}, {\x*sin(105)});
\draw [red] ({6.7*cos(90)},{6.7*sin(90)}) node[label=center:\large $R_{12}$] {}; 
 \draw [blue,ultra thick, domain=73:165] plot ({6*cos(\x)}, {6*sin(\x)});
\draw [blue,ultra thick, domain=73:165] plot ({4*cos(\x)}, {4*sin(\x)});
\draw [blue,ultra thick, dashed, domain=13:73] plot ({6*cos(\x)}, {6*sin(\x)});
\draw [blue,ultra thick, dashed, domain=13:73] plot ({4*cos(\x)}, {4*sin(\x)});
\draw [blue,ultra thick, dashed, domain=4:6] plot ({\x*cos(13)}, {\x*sin(13)});
\draw [blue,ultra thick, domain=4:6] plot ({\x*cos(165)}, {\x*sin(165)});
\draw [blue] ({3.4*cos(150)},{3.4*sin(150)}) node[label=center:\large $B_{10}$] {}; 
 \draw [blue,ultra thick, dashed, domain=-48:13] plot ({6.2*cos(\x)}, {6.2*sin(\x)});
\draw [blue,ultra thick, dashed, domain=-48:13] plot ({3.8*cos(\x)}, {3.8*sin(\x)});
 \draw [blue,ultra thick, domain=13:108] plot ({6.2*cos(\x)}, {6.2*sin(\x)});
\draw [blue,ultra thick, domain=13:108] plot ({3.8*cos(\x)}, {3.8*sin(\x)});
\draw [blue,ultra thick, dashed, domain=3.8:6.2] plot ({\x*cos(-48)}, {\x*sin(-48)});
\draw [blue,ultra thick, domain=3.8:6.2] plot ({\x*cos(108)}, {\x*sin(108)});
\draw [blue] ({3.3*cos(90)},{3.3*sin(90)}) node[label=center:\large $B_{12}$] {}; 
\end{tikzpicture}
\caption{Cyclic order $C_\iota^\omicron$ centered at $r_n$ (with $k=s+1$).}
\label{kspone}
\end{figure}
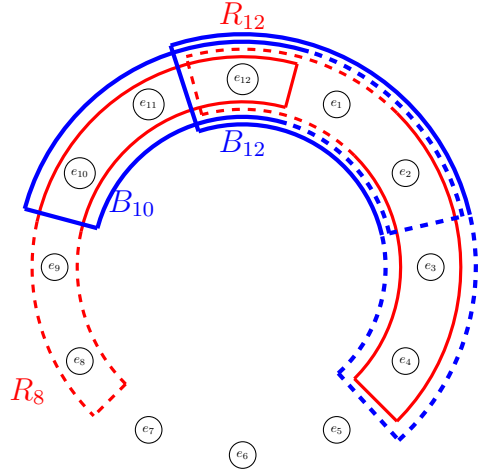
\noindent
We now show that each cyclic order in $\mathscr{C}_n$ is centered at the same vertex. This would prove the uniqueness of the extremal structures for Theorem \ref{th1}. We first assume, without loss of generality (relabeling if necessary), that the canonical cyclic order $C_\iota^\omicron$ is centered at $e_n^0=l_n$. In other words, we have $k=k(\iota,\omicron,p,s)=1$, $\mathcal{B}_\sigma^\tau=\{B_{n-p-s+1},B_{n-p-s+2},\ldots, B_n\}$,and $\mathcal{R}_\sigma^\tau=\{R_{n-p-s+1}, R_{n-p-s+2}\ldots,R_{n-s}\}$. Our final lemma proves that every cyclic order in $\mathscr{C}_n$ is centered at $l_n$. 
\\
\\
\noindent We first make some preliminary observations that will help simplify its proof. For any $C_\sigma^\tau$, we denote its \textit{reflection} by $C_{\overline{\sigma}}^{\overline{\tau}}$, where $\overline{\sigma}$ is the permutation given by $\overline{\sigma}(i)=\sigma(n-i)$ for each $1\leq i\leq n-1$, and $\overline{\tau}$ is the sequence given by $\overline{\tau}_i=\tau_i+1 (\textrm{mod } 2)$ for each $1\leq i\leq n$. It is easy to see that $\mathcal{F}_\sigma^\tau=\mathcal{F}_{\overline{\sigma}}^{\overline{\tau}}$; indeed, we have $\mathcal{B}_\sigma^\tau=\mathcal{R}_{\overline{\sigma}}^{\overline{\tau}}$, and $\mathcal{R}_\sigma^\tau=\mathcal{B}_{\overline{\sigma}}^{\overline{\tau}}$. Additionally, we also have $k(\sigma,\tau,p,s)=(s+2)-k(\overline{\sigma},\overline{\tau},p,s)$. In view of this, we can restrict our attention to only those cyclic orders $C_\sigma^\tau$ with $\tau_n=0$. As mentioned earlier, we will also identify each $\sigma$ as a permutation in $S_{n-1}$. We denote this subset of $\mathscr{C}_n$ by $\mathscr{C}'(n)$. Clearly $C_\iota^\omicron\in \mathscr{C}'(n)$.

\begin{lem}\label{lem4}
    Let $C_\sigma^\tau\in \mathscr{C}'_n$ be centered at vertex $l_n$. Then:    
    \begin{enumerate}
        \item \label{tp} For each $1\leq i\leq n-2$,  $C_{\pi_i}^\phi$ is centered at $l_n$.
        \item \label{sp} There exists $1\leq j\leq n-1$ such that $C_\sigma^{\tau^j}$ is centered at $l_n$.
    \end{enumerate}
\end{lem}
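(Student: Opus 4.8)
The plan is to recast ``centered at $l_n$'' in terms of two distinguished $B$-intervals and then track only these two throughout. Observe that for any order $C_\sigma^\tau\in\mathscr{C}'_n$ the edge $e_n$ occupies position $n$ with orientation $(l_n,r_n)$, since $\sigma(n)=n$ and $\tau_n=0$. The two $B$-intervals $B_{n-p-s+1}(\sigma,\tau)$ and $B_n(\sigma,\tau)$ share only position $n$ (here I use $n>2(p+s)$), where the former contributes the singleton $l_n$ and the latter the full edge $e_n$; hence $B_{n-p-s+1}\cap B_n=\{l_n\}$. Because Lemma \ref{lem3} guarantees that each order has a unique center, I obtain the reformulation that drives the whole proof: \emph{$C_\sigma^\tau$ is centered at $l_n$ if and only if both $B_{n-p-s+1}(\sigma,\tau)$ and $B_n(\sigma,\tau)$ lie in $\mathcal{F}$}. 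More generally, Lemma \ref{lem2} shows that a center at a first vertex $e_{\sigma(q)}^{\tau_q}$ forces the $B$-run to be $\{B_{q-p-s+1},\dots,B_q\}$ (and symmetrically for a second-vertex center and the $R$-run); this is the exclusion tool used below.

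I would dispatch part \ref{sp} first, as it is cleaner. A swap $s_j$ changes the vertex set of an interval precisely when position $j$ is a \emph{singleton} position of that interval, since flipping the orientation of a full edge leaves its vertex set untouched. The singleton positions of $B_{n-p-s+1}$ and of $B_n$ together number at most $2s-1$ inside $[n-1]$, while $n>2(p+s)$ gives $n-1>2s-1$; hence some $j\in[n-1]$ is a singleton position of neither. For that $j$ the swap fixes both distinguished intervals, so they remain in $\mathcal{F}$, and the reformulation yields that $C_\sigma^{\tau^j}$ is centered at $l_n$.

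For part \ref{tp}, note first that $i\le n-2$ keeps position $n$ fixed, so $C_{\pi_i}^\phi\in\mathscr{C}'_n$ and is centered by Lemma \ref{lem3}; it remains only to identify the center as $l_n$. A transposition $t_i$ alters the vertex set of an interval only when $\{i,i+1\}$ straddles either its full/singleton interface or one of its outer endpoints, so $B_{n-p-s+1}$ and $B_n$ are each unaffected except at a constant number of boundary values of $i$ (the relevant interfaces and endpoints sit at positions $n-s$, $n-p-s$ and $p-1$, $p+s-1$, insofar as these fall in $[1,n-2]$). For every other $i$ both distinguished intervals survive intact, remain in $\mathcal{F}$, and the reformulation closes the case. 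The main obstacle is exactly this handful of boundary values, where one distinguished $B$-interval is broken; this is the heart of the argument.

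For a boundary $i$ I would argue as follows. The center $y$ of $C_{\pi_i}^\phi$ must lie in every surviving member of $\mathcal{F}_\sigma^\tau$; a short computation shows that intersecting the survivors (certain $B$-intervals, and when $s=1$ also surviving $R$-intervals, all of which still contain $l_n$) pins $y$ down to exactly two candidates, namely $l_n$ and one spurious vertex $z$ lying at an untouched position just past the end of the active window. To exclude $z$, I invoke the exclusion tool: the orientation of $z$ determines the unique way it could serve as a center, and that configuration forces the corresponding run of $C_{\pi_i}^\phi$ to shift by one, which in turn demands that a specific interval $B_b(\pi_i,\phi)$, lying just outside the original run and unaffected by $t_i$, belong to $\mathcal{F}$. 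But that interval equals $B_b(\sigma,\tau)$, a $B$-interval of $C_\sigma^\tau$ outside $\mathcal{B}_\sigma^\tau=\{B_{n-p-s+1},\dots,B_n\}$, hence not in $\mathcal{F}$ --- a contradiction. Thus $y=l_n$. The two remaining boundary values, where $B_{n-p-s+1}$ rather than $B_n$ is broken, are handled by an identical argument with $z$ now sitting just past the other end of the window. The one place I would check the arithmetic carefully is this final collapse of the survivor intersection to two candidates, and in particular the parity bookkeeping that fixes whether $z$ is a first- or second-vertex (equivalently $k'=1$ versus $k'=s+1$), since the whole exclusion hinges on it.
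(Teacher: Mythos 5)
Your proof has the same skeleton as the paper's: recast ``centered at $l_n$'' as membership in $\mathcal{F}$ of the two distinguished intervals $B_{n-p-s+1}$ and $B_n$ (whose intersection is $\{l_n\}$), observe that a generic adjacent transposition fixes both, choose a swap position that is a singleton position of neither interval, and fall back on the run structure from Lemmas \ref{lem2} and \ref{lem3} at the exceptional values of $i$. Your Part \ref{sp} argument is complete and is essentially the paper's (which just takes $j=p+s$). The substantive difference is in Part \ref{tp}: you identify \emph{four} exceptional values, $i\in\{p-1,\,p+s-1,\,n-p-s,\,n-s\}$, whereas the paper singles out only $p+s-1$ and $n-p-s$ and asserts in its remaining case that both distinguished intervals are unchanged for every other $i$. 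Here you are right and the paper's case analysis is incomplete: for $p\geq 2$, the transposition $t_{p-1}$ exchanges a full edge of $B_n$ with an adjacent singleton, so $B_n(\pi_{p-1},\phi)$ contains $r_p$ but not $r_{p-1}$ and therefore differs from $B_n(\iota,\omicron)$ as a vertex set; symmetrically, $t_{n-s}$ (for $s\geq 2$) alters $B_{n-p-s+1}$. So your two extra cases are genuinely needed, not redundant. Moreover, your uniform mechanism for the exceptional cases --- intersect the surviving members of $\mathcal{F}_\iota^\omicron$, then kill each spurious candidate center because the run forced by Lemma \ref{lem2} would require an interval such as $B_{n-p-s}(\pi_i,\phi)$ or $B_1(\pi_i,\phi)$ (untouched by $t_i$, hence equal to an interval of the original order outside $\mathcal{B}_\iota^\omicron$) to belong to $\mathcal{F}$ --- is sound, and it subsumes the paper's two bespoke case arguments.

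One correction to the step you flagged: the survivor intersection is not always two points, and the spurious candidates are not always first vertices. When $s=1$ the surviving $B$-intervals can share whole edges: for $i=n-p-1$ every surviving $B_j$ (with $j\geq n-p+1$) contains the full edge $e_n$, so the candidate set is $\{l_n,r_n,l_1\}$, and for $i=p-1$ it is $\{l_n,l_{n-1},r_{n-1},r_{n-2}\}$; the surviving $R$-intervals do not cut these down. Each second-vertex candidate must then be excluded with the $k'=s+1$ form of your exclusion tool: centering at $r_n$ (say) would force the $R$-run to contain $R_n(\pi_i,\phi)=R_n(\iota,\omicron)\notin\mathcal{F}$, and similarly $R_{n-p-1}$, $R_{n-p-2}$ for the other two. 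This is precisely the parity bookkeeping you anticipated, and it does close in every case; but the claims ``exactly two candidates'' and ``a specific interval $B_b$'' should be amended to allow for these $s=1$ configurations and for $R$-interval exclusions.
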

\noindent
Before we proceed to the proof, we note that the $n-2$ adjacent transpositions and any one of the $n-1$ swaps generate all of the cyclic orders in $\mathscr{C}'(n)$. \footnote{For swaps $s_i$ and $s_j$ with $i<j$, it is clear to see that $s_i=t_{i}\circ t_{i+1}\circ \cdots \circ t_{j-2} \circ t_{j-1}\circ s_j \circ t_{j-1} \circ t_{j-2}\circ \cdots \circ t_{i+1}\circ t_{i}$. Note that the standard multiplication convention is adopted here, i.e., the operations are applied in order from right to left.} Lemma \ref{lem4} thus immediately implies that every cyclic order in $\mathscr{C}'(n)$ is centered at $l_n$.
\begin{proof}[Proof of Lemma \ref{lem4}]

It suffices to prove the lemma for the canonical cyclic order $C_\iota^\omicron$. Note that $B_{n-p-s+1}(\iota,\omicron)\cap B_n(\iota,\omicron)=\{l_n\}$. We first prove Part \ref{tp} of the lemma. 
\\
\\
Let $1\leq i\leq n-2$. The following observation will be frequently used: For any $C^\sigma_\tau$ with $|\mathcal{F}_\sigma^\tau|=2p+s$, if there exist $B, B'\in \mathcal{B}_\sigma^\tau$ with $B\cap B'=\{x\}$, then $C^\sigma_\tau$ is centered at $x$. The proof now splits into cases, depending on the value of $i$.

\begin{enumerate}
    \item Let $i \notin \{ p+s-1, n-p-s\}$. Then $B_{n-p-s+1}(\pi_i,\phi)=B_{n-p-s+1}(\iota,\omicron)$ and $B_n(\pi_i,\phi)=B_n (\iota,\omicron)$. Using the observation from above, we can conclude that $C_{\pi_i}^\phi$ is centered at $l_n$.
    \item Let $i=p+s-1$. Then $B_j(\pi_i,\phi)=B_j(\iota,\omicron)$ for each $n-p-s+1\leq j\leq n-1$. Similarly, $B_{n-p-s}(\pi_i,\phi)=B_{n-p-s}(\iota,\omicron)$ and $R_{n-p-s}(\pi_i,\phi)=R_{n-p-s}(\iota,\omicron)$ since $n>2(p+s)$; thus, neither $B_{n-p-s}(\pi_i,\phi)$ nor $R_{n-p-s}(\pi_i,\phi)$ are in $\mathcal{B}_{\pi_i}^\phi$ and $\mathcal{R}_{\pi_i}^\phi$, respectively, as they are both disjoint from $B_n(\iota,\omicron)$. This implies that $k(\pi_i,\phi,p,s)=1$, i.e., $B_n(\pi_i,\phi)\in \mathcal{B}_\pi^\phi$. Thus, $C_{\pi_i}^\phi$ is centered at $l_n$.
    \item Let $i=n-p-s$. In this case, $B_j(\pi_i,\phi)=B_j(\iota,\omicron)\notin \mathcal{B}_{\pi_i}^\phi$ for $j=1$ and $B_j(\pi_i,\phi)=B_j(\iota,\omicron)\in \mathcal{B}_{\pi_i}^\phi$ for $n-p-s+2\leq j\leq n$. Suppose that $B_{n-p-s+1}(\pi_i,\phi)\notin \mathcal{B}_{\pi_i}^\phi$. Then $k(\pi_i,\phi,p,s)=2$, which by Lemma \ref{lem3} implies that $s=1$. However, Lemma \ref{lem2} implies that $R_n(\pi_i,\phi)\in \mathcal{R}_{\pi_i}^\phi$, a contradiction, as $R_n(\pi_i,\phi) \cap B_{n-p-s+1}(\iota,\omicron)=\emptyset$.
\end{enumerate}
\noindent
We now prove Part \ref{sp} of the lemma. For this, we choose $j=p+s$; indeed, any $j\in [p+s,n-(p+s)]$ suffices. Clearly, $C_\sigma^{\tau^j}$ is still centered at $l_n$. 
\end{proof}
\section{Proof of Theorem \ref{th2}}\label{SOne}
We begin by proving Part \ref{two1} of Theorem \ref{th2}. Let $p, s\geq 1$, and let $\mathcal{F}\subseteq \mathcal{H}^{(p,s)}(2p+s)$ be intersecting. Now, consider any subgraph $H\in \mathcal{H}^{(p,s)}(2p+s)$. Clearly, the subgraph $H'$ induced by the vertex set $V(M_n)\setminus V(H)$, is also a member of $\mathcal{H}^{(p,s)}(2p+s)$. Thus, at most one of $H$ and $H'$ can be in $\mathcal{F}$. This immediately yields the upper bound $|\mathcal{F}|\leq \dfrac{1}{2}|\mathcal{H}^{(p,s)}(2p+s)|$, and using Equation \ref{eq3}, this simplifies to $|\mathcal{F}|\leq |\mathcal{H}_x^{(p,s)}(n)(2p+s)|$ (where $x$ is any vertex from $V(M_n)$), completing a proof of Part \ref{two1} of the theorem. 
\\
\\
To see that $\mathcal{H}^{(p,s)}(2p+s)$ is not strongly EKR, we construct an intersecting subfamily of $\mathcal{H}^{(p,s)}(2p+s)$ that has extremal size, but is not a star. Let $\mathcal{G}=\mathcal{H}^{(p,s)}(2p+s)\setminus \mathcal{H}^{(p,s)}_{l_{2p+s}}(2p+s)$ be the family of all subgraphs that do not contain the vertex $l_{2p+s}$. It is clear to see that $\mathcal{G}$ has extremal size but is not a star, so we only need to prove that it is intersecting. By way of contradiction, suppose $G_1,G_2\in \mathcal{G}$ and $G_1\cap G_2=\emptyset$. Without loss of generality, suppose that $G_1$ contains (both endpoints from) each of the $p$ edges $e_1,e_2,\ldots,e_{p}$, while $G_2$ contains each of the $p$ edges $e_{p+1},\ldots,e_{2p}$. Thus, the $s$ singletons in both $G_1$ and $G_2$ must be from the $s$ edges $e_{2p+1},\ldots,e_{2p+s}$. By definition of $\mathcal{G}$, this implies $r_{2p+s}\in G_1\cap G_2$, a contradiction. 
\\
\\
We proceed to Part \ref{two2} of Theorem \ref{th2}. Let $s=1$. We know from Theorem \ref{th1} that $\mathcal{H}^{(p,1)}(n)$ is EKR when $n\geq 2p+2$ and strongly EKR when $n>2p+2$. Additionally, we know from Part \ref{two1} of Theorem \ref{th2} that $\mathcal{H}^{(p,1)}(n)$ is EKR for $n=2p+1$. We now prove that it is strongly EKR when $n=2p+2$. Let $\mathcal{F}\subseteq \mathcal{H}^{(p,1)}(2p+2)$ be intersecting, and let $|\mathcal{F}|=(2p+1)\binom{n-1}{p}$.
\\
\\
We use a more standard Katona-type argument to prove this result. In particular, we only consider those cyclic orders $C_\sigma=C_\sigma^\omicron\in \mathscr{C}_n$ where $\sigma\in S_n$ with $\sigma(n)=n$ and $\omicron\in \{0,1\}^n$ is the all-zeros sequence of length $n$. Clearly there are $(n-1)!$ such cyclic orders. Additionally, we can interpret a given cyclic order $C_\sigma$ as a permutation of the vertex set $V(M_n)$. As an example, for $n=6$ and $\sigma=(\sigma(1),\ldots,\sigma(6))=(5,3,2,1,4,6)$, the corresponding cyclic order is $(l_5,r_5,l_3,r_3,l_2,r_2,l_1,r_1,l_4,r_4,l_6,r_6)$. Note that any sequence containing $2p+1$ consecutive vertices from a cyclic order $C_\sigma$ corresponds to a subgraph in $\mathcal{H}^{(p,1)}(n)$. This correspondance allows us to directly apply many of the ideas from Katona's proof of the EKR theorem \cite{Katona}. 

For a given cyclic order $C_\sigma$, a subgraph $H \in \mathcal{H}^{(p,1)}(n)$ is an \textit{interval} in $C_\sigma$ if either
$$H=\{r_{\sigma(i)}, e_{\sigma(i+1)}, \ldots, e_{\sigma(i+p)} \},$$ or 

$$H=\{e_{\sigma(i)},\ldots, e_{\sigma(i+p-1)},  l_{\sigma(i+p)} \},$$
for some $i \in [n]$, where addition is carried out modulo $n$. For a given $\sigma$, we refer to the former type as an \textit{$r$-interval} at position $i$, and the latter as the \textit{$l$-interval} at position $i$. Let $\mathcal{F}_\sigma$ be the set of all subgraphs in $\mathcal{F}$ that are intervals in $C_\sigma$. 
\\
\\
Since $|\mathcal{F}|$ has maximum size, it implies that for each cyclic order $C_\sigma$, $|\mathcal{F}_\sigma|=2p+1$. Using the corollary of Katona's lemma (cited in the proof of Lemma \ref{lem2} in Section \ref{characterization}), there exists $x\in V(M_n)$ such that $C_\sigma$ is centered at $x$, i.e., $x\in \bigcap\limits_{F\in \mathcal{F}_\sigma} F$.
\\
\\
To complete the proof, we focus on the only remaining case $n=2p+2$; without loss of generality, assume that the permutation $\iota=(1, 2, \ldots, 2p+2)$ is centered at $r_{2p+2}$. As in the proof of Lemma \ref{lem4} from Theorem \ref{th1}, to show that $\mathcal{F}=\mathcal{H}^{(p,1)}_{r_{2p+2}}(2p+2)$, it suffices to show that for $\sigma=\iota\circ t_i$, where $1\leq i \leq 2p$, $C_\sigma$ is centered at $r_{2p+2}$. Let $H=\{r_{2p+2}, e_1, \ldots, e_p\}$, and $K=\{r_{p+2},e_{p+3},\ldots,e_{2p+2}\}$. Note that $H, K\in \mathcal{F}_\iota$, and $H\cap K=\{r_{2p+2}\}$.
The proof now proceeds through cases, depending on the value of $i$. In each case, we prove that in $C_\sigma$, there are two intervals in $\mathcal{F}_\sigma$ that intersect only in $r_{2p+2}$. The only non-trivial cases to consider are when $i\in \{p,p+1,p+2\}$, as for any other value of $i$, both $H$ and $K$ are unchanged by the adjacent transposition $t_i$. 
\begin{enumerate}
    \item Let $i=p$. Clearly, the interval $K$ is unchanged by the transposition $t_p$. However, the $l$-interval at position $p+2$ is also unchanged by the transposition and is thus not a member of $\mathcal{F}_\sigma$. Since $|\mathcal{F}_\sigma|=2p+1$, this implies that the $s$-interval at position $2p+2$, say $H'$ is in $\mathcal{F}_\sigma$. Since $K\cap H'=\{r_{2p+2}\}$, $C_\sigma$ is centered at $r_{2p+2}$. 
    \item Let $i=p+1$. In this case, the interval $H$ is unchanged by the transposition. We can also argue that the $l$-interval at position $1$, say $H'$, is not in $\mathcal{F}_\sigma$. By way of contradiction, suppose that $H'\in \mathcal{F}_\sigma$. Since $H'=\{e_1,\ldots,e_p,l_{p+2}\}$, clearly $H'\cap K=\emptyset$, a contradiction. This implies that the $r$-interval at position $p+2$, say $K'$, is in $\mathcal{F}_\sigma$. Since $H\cap K'=\{r_{2p+2}\}$, $C_\sigma$ is centered at $r_{2p+2}$. 
    \item Let $i=p+2$. As in the previous case, the interval $H$ is unchanged by $t_i$. However, the $l$-interval at position $1$ is also unchanged by $t_i$ and is thus not a member of $\mathcal{F}_\sigma$. This implies that the $r$-interval at position $p+2$, say $\tilde{K}$, is in $\mathcal{F}_\sigma$. Since $H\cap \tilde{K}=\{r_{2p+2}\}$, $C_\sigma$ is centered at $r_{2p+2}$. \qedsymbol
\end{enumerate}

\section{Future directions} \label{future}
It is evident from the proof of Theorem \ref{th1} that a stronger condition on $n$ (in comparison to the one proposed by Conjecture \ref{cj}) is required for the use of Katona's cycle method. It is currently unclear to us if the cycle method can be used to settle the conjecture completely. The algebraic framework for applying Katona's cycle method described in \cite{BorgMea} would be an interesting possibility to consider; shifting/compression techniques are another. 
\\
\\
We also propose the following general formulation of the problem that can potentially lead to other, natural extensions of the EKR theorem and its variants. For positive integers $m$ and $n$, let $G_{m,n}$ be a graph that has exactly $m$ components, each of which is a copy of $K_n$, the complete graph on $n$ vertices. Consider a sequence of non-negative integers $\mathbf{s}=(s_1,\ldots,s_n)$ with $\sum_{i=1}^n s_i\leq m$. If $H$ is an induced subgraph of $G_{m,n}$, we say that $H$ has signature $\mathbf{s}$ if it satisfies the following two conditions:
\begin{enumerate}
    \item $H$ contains exactly $\sum_{i=1}^n s_i$ components, with exactly $s_i$ copies of $K_i$ for each $1\leq i\leq n$. 
    \item Let $H_1,H_2$ be any two distinct components of $H$. If $H_1\subseteq G_1$ and $H_2\subseteq G_2$, where $G_1$ and $G_2$ are components of $G_{m,n}$, then $G_1\neq G_2$.
\end{enumerate}
Let $\mathcal{H}^{\mathbf{s}}(m,n)$ be the family of all induced subgraphs of $G_{m,n}$ with signature $\mathbf{s}$.
\begin{prob}
    For a signature sequence $\mathbf{s}$ and positive integers $m$ and $n$, find a best possible function $f(m,\mathbf{s})$ such that for $n\geq f(m,\mathbf{s})$, $\mathcal{H}^{\mathbf{s}}(m,n)$ is EKR.
\end{prob}


\end{document}